\let\pa\partial
\let\na\nabla
\newcommand{\N}{{\mathbb N}}
\newcommand{\R}{{\mathbb R}}
\newcommand{\diver}{\operatorname{div}}
\newcommand{\E}{{\mathbb E}}
\renewcommand{\d}{\mathrm{d}}
\newcommand{\C}{{\mathcal C}}
\newcommand{\F}{{\mathcal F}}
\newcommand{\G}{{\mathcal G}}
\newcommand{\Prob}{{\mathbb P}}
\newtheorem{theorem}{Theorem}
\newtheorem{lemma}[theorem]{Lemma}
\newtheorem{proposition}[theorem]{Proposition}
\begin{document}

\title[Multi-species random-batch method]{Random-batch method for multi-species \\
stochastic interacting particle systems}

\author[E. S. Daus]{Esther S. Daus}
\address{Institute for Analysis and Scientific Computing, Vienna University of
	Technology, Wiedner Hauptstra\ss e 8--10, 1040 Wien, Austria}
\email{esther.daus@tuwien.ac.at}

\author[M. Fellner]{Markus Fellner}
\address{Institute for Analysis and Scientific Computing, Vienna University of
	Technology, Wiedner Hauptstra\ss e 8--10, 1040 Wien, Austria}
\email{markus.fellner@tuwien.ac.at}

\author[A. J\"ungel]{Ansgar J\"ungel}
\address{Institute for Analysis and Scientific Computing, Vienna University of
	Technology, Wiedner Hauptstra\ss e 8--10, 1040 Wien, Austria}
\email{juengel@tuwien.ac.at}

\date{\today}

\thanks{The authors have been partially supported by the Austrian Science Fund (FWF), 
grants P30000, P33010, F65, and W1245. This work received funding from the European 
Research Council (ERC) under the European Union's Horizon 2020 research and 
innovation programme, ERC Advanced Grant NEUROMORPH, no.~101018153.}

\begin{abstract}
A random-batch method for multi-species interacting particle systems is proposed,
extending the method of S.~Jin, L.~Li, and J.-G.~Liu
[{\em J. Comput. Phys.} 400 (2020), 108877]. The idea
of the algorithmus is to randomly divide, at each time step, 
the ensemble of particles into small batches
and then to evolve the interaction of each particle within the batches until
the next time step. This
reduces the computational cost by one order of magnitude, while keeping a certain
accuracy. It is proved that the $L^2$ error of the error process behaves 
like the square root
of the time step size, uniformly in time, thus providing the convergence of the scheme.
The numerical efficiency is tested for some examples, and
numerical simulations of the opinion dynamics in a hierarchical company, 
consisting of workers, managers, and CEOs, are presented.
\end{abstract}

% \paragraph{Keywords:}
\keywords{Stochastic particle systems, random batch method, error analysis, 
population model, opinion dynamics.}

% \paragraph{AMS classification:}
\subjclass[2000]{60J70, 65M75, 88C22.}

\maketitle

%%%%%%%%%%%%%%%%%%%%%%%%%%%%%%%%%%%%%%%%%%%%%%%%%%%%%%%%%%%%%%%%%%%%%%%%%%

\section{Introduction}

The collective behavior of particles or agents of multiple species 
can be described by interacting
particle systems, which are an important tool for modeling complex real-world
phenomena with applications in physics, biology, and social sciences. 
The binary interaction between all particles makes numerical simulations very demanding
when many agents need to be modeled,
which explains the need for efficient algorithms. Averaged results can be obtained
from the associated mean-field equations, 
while the individual dynamics is captured by direct simulations, 
using fast summation algorithms, like
fast multipole methods \cite{GrRo87}, wavelet transforms \cite{BCR91},
or variants of Monte--Carlo methods \cite{Caf98}. Recently, motivated by mini-batch 
gradient descent in machine learning (see, e.g., \cite{LZCS14}), 
the authors of \cite{JLL20} suggested to use small random batches
in interacting particle systems, 
which results in the reduction of the computational cost per time step 
from $O(N^2)$ to $O(N)$ ($N$ being the number of particles or agents).
Compared to other efficient sampling methods, like the Ewald summation or the
fast multipole method, the random-batch method is easier to implement and more
flexible to apply in complex systems. 
The results of \cite{JLL20} are valid in the single-species case.
In this paper, we generalize their approach to multi-species systems.
In particular, we work out the dependence of the $L^2$ error with respect to the
batch sizes of the different species and discuss the case of multiplicative noise.

\subsection{Setting}

The dynamics of the multi-species system is described by
\begin{align}
  \d X_i^k &= -\na V_i(X_i^k)\d t + \sum_{j=1}^n\alpha_{ij}\sum_{\substack{\ell=1 \\ 
	(i,k)\neq (j,\ell)}}^{N_j} K_{ij}(X_i^k-X_j^\ell)\d t 
	+ \sigma_i\d B_i^k(t), \label{1.eq} \\
	X_i^k(0) &= X_{0,i}^k \quad\mbox{for } i=1,\ldots,n,\ k=1,\ldots,N_i, \label{1.ic}
\end{align}
where
\begin{equation}\label{1.alpha}
  \alpha_{ij} = \frac{1}{N_j-\delta_{ij}}, \quad i,j=1,\ldots,n.
\end{equation}
The stochastic process $X_i^k(t)\in\R^d$ ($d\ge 1$) represents the position of
the $k$th particle (or the features of the $k$th agent)
of species $i$ in a system of $N=\sum_{i=1}^n N_i$ 
particles. The function $\na V_i$ describes some (given) external force, $K_{ii}$
and $K_{ij}$ are the interaction kernels between particles of the same and of
different species, respectively, $\sigma_i>0$ are diffusion coefficients, 
and $B_i^k$ are $N$ independent standard
Brownian motions. The initial data $X_{0,i}^1,\ldots,X_{0,i}^{N_i}$ are assumed 
to be independent and identically distributed. 

Equations \eqref{1.eq} can be used to model the information flow through social 
networks \cite{Ald13}, the dynamics of opinions \cite{FaRa21},
the herding of sheep by dogs \cite{SMWHMSK14},
or the segregation behavior of populations \cite{CDJ19}.
Stochastic gradient descent can be interpreted as the evolution of interacting
particle systems governed by a potential related to the objective function
used to train neural networks \cite{RoVa18}. 

\subsection{Random-batch method}

The random-batch method is defined as follows. 
Let the number of particles $N_i\in\N$ of the $i$th species
be an even number, where $i=1,\ldots,n$. We introduce the time steps
$t_m=m\tau$ with the time step size $\tau>0$ and $m=1,\ldots,M:=\lceil T/\tau\rceil$,
and $T>0$ is the end time. For a given $m\in\{1,\ldots,M\}$,
we divide the set $\{1,\ldots,N_i\}$ randomly into $b_i$ batches 
$\C_{i,1},\ldots,\C_{i,b_i}$ of size $p_i$. 
This means that we choose $p_i\ge 2$ and $b_i\ge 1$ such that $N_i=b_ip_i$,
and we consider not all interactions but only those in the same batch.
Furthermore, we introduce the super-batches $\C_r=\{(i,k):k\in\C_{i,r}\}$ for
$1\le r\le\max\{b_1,\ldots,b_n\}$ (see Figure \ref{fig.bat}). 
For any particle $X_i^k$, there exists exactly
one super-batch such that $(i,k)\in\C_r$ for some $r\ge 0$.

\begin{figure}[ht]
\includegraphics[width=85mm]{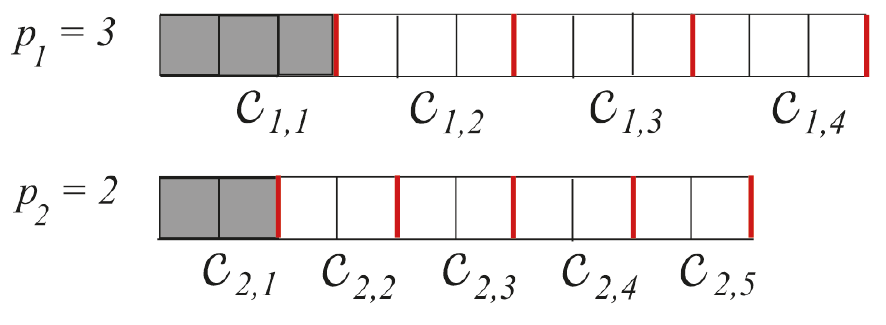}
\caption{Batches $\C_{i,r}$ for a two-species system with $N=22$ particles,
four batches of size
$p_1=3$, and five batches of size $p_2=2$. The particles in the super-batch $\C_1$
are marked in grey color.}
\label{fig.bat}
\end{figure}

We solve the particle system in the time interval
$(t_{m-1},t_m]$ with initial datum $\widetilde{X}_i^k(t_{m-1})$. 
The random-batch process $\widetilde{X}_i^k$ is defined 
for $t_{m-1}<t\le t_m$ as the solution to
\begin{equation}\label{1.rbm}
  \d\widetilde{X}_i^k = -\na V_i(\widetilde{X}_i^k)\d t + \sum_{j=1}^n\beta_{ij}
	\sum_{\substack{\ell\in\C_{j,r} \\ (i,k)\neq(j,\ell)}} K_{ij}
	(\widetilde{X}_i^k-\widetilde{X}_j^\ell)\d t + \sigma_i \d B_i^k, 
\end{equation}
where
\begin{equation}\label{1.beta}
  \beta_{ij} = \frac{b_i}{(p_j-\delta_{ij})\min\{b_i,b_j\}}, \quad i,j=1,\ldots,n.
\end{equation}
Instead of summing over all interactions, the sum in \eqref{1.rbm}
only accounts for the interactions in each small batch.
Observe that we use the same Brownian motions as in \eqref{1.eq}.
The sum over all $\ell\in\C_{j,r}$ means that we sum over all $(j,\ell)$
which are in the same super-batch as $(i,k)$. The factor $b_i/\min\{b_i,b_j\}$ 
in \eqref{1.beta} does not appear in \cite{JLL20}; it is necessary
to achieve consistency and convergence of the scheme. The scaling results from the
different number of nontrivial batches $\C_{i,r}$ of the different species.
Indeed, let $b_j<b_i$. From the viewpoint of the particles of the $i$th species, 
they interact with the particles of the $j$th species only with the share 
$b_j/b_i$, since $\C_{i,r}$ is empty for $r>b_j$. This yields the factor $b_i/b_j$. 
The random-batch algorithm is summarized in Algorithm \ref{algo}.

\begin{algorithm}
\label{algo}
\begin{algorithmic}[1]
\FOR{$k=m,\ldots,M$}
\FOR{$i=1,\ldots,n$}
\STATE Divide $\{1,\ldots,N_i\}$ randomly into $b_i$ batches 
$\C_{i,1},\ldots,\C_{i,b_i}$ with size $p_i$ each. 
\FOR{$r=1,\ldots,b_i$}
\STATE For every $(i,k)\in\C_r$, update $\widetilde{X}_i^k$ by solving
\eqref{1.rbm} in the interval $(t_{m-1},t_m]$ with initial datum 
$\widetilde{X}_i^k(t_{m-1})$.
\ENDFOR
\ENDFOR
\ENDFOR
\end{algorithmic}
\caption{(Pseudo-code for the multi-species random-batch algorithm)}
\end{algorithm}

When we allow for pairwise interactions between {\em all} particles, the computational
cost at each time step is of order $O(N^2)$. 
Since we have $M$ time steps, the total cost of this
naive algorithm is $O(MN^2)$. In the random-batch method, each particle ends up in
exactly one super-batch $\C_r$ for some $r\ge 1$
and is chosen only once (i.e.\ without replacement).
Then the total computatinal cost becomes $O(pMN)$, where $p=\sum_{i=1}^{n}p_{i}$. 
As $p$ is typically a small number (often $p_{i}=2$), the total cost has been 
reduced by approximately one order
of magnitude. We show in this paper that, under suitable conditions on the
external potentials and the kernel functions, the $L^2$ error
of the error process $\widetilde{X}_i^k(t)-X_i^k(t)$
converges to zero as $\tau\to 0$ uniformly in time, and the convergence
is, as expected, of order $O(\sqrt{\tau})$. The idea of the method is the fact
that in time average, the random force is consistent with the full interaction
(see Proposition \ref{prop.cons}), and the convergence is like in the law
of large numbers, but in time.

\subsection{Main result}

We start with some definitions and notation.
Let $(\Omega,\F,\mathbb{F},\Prob)$ be a filtered probability space, 
let $\xi_{m-1,i}$ denote the random division of batches of species $i$ 
at $t_{m-1}$, and set $\xi_{m-1}=(\xi_{m-1,1},\ldots,\xi_{m-1,n})$.
We define the filtrations $(\F_m)_{m\ge 0}$ and $(\G_m)_{m\ge 0}$ by
\begin{align*}
  \F_{m-1} &= \sigma\big(X_{0,i}^k,\,B_i^k(t),\,\xi_{j,i}:\,t\le t_{m-1},\,1\le i\le n,
	\,1\le j\le m-1\big), \\
	\G_{m-1} &= \sigma\big(X_{0,i}^k,\,B_i^k(t),\,\xi_{j,i}:\,t\le t_{m-1},\,1\le i\le n,
	\,1\le j\le m-2\big).
\end{align*}
The set $\F_{m-1}$ contains the information how the batches are constructed 
for $t\in[t_{k-1},t_k)$.
Denoting by $\sigma(\xi_{m-1,i})$ the $\sigma$-algebra generated by 
$\xi_{m-1,i}$, it holds that $\F_{m-1}=\sigma(\G_{m-1}\cup\sigma(\xi_{m-1,1})\cup
\cdots\cup\sigma(\xi_{m-1,n}))$. We write $\|\cdot\|_p
=(\E|\cdot|^p)^{1/p}$ to denote the $L^p(\Omega)$ norm for $1\le p<\infty$
and set $\|\cdot\|=\|\cdot\|_2$. In the whole paper,
$C>0$, $C_i>0$ denote generic constants whose values change from line to line.
We set $X=(X_i^k)_{i=1,\ldots,n}^{k=1,\ldots,N_i}$ and
$\widetilde{X}=(\widetilde{X}_i^k)_{i=1,\ldots,n}^{k=1,\ldots,N_i}$.

We impose the following assumptions:

\renewcommand{\labelenumi}{(A\theenumi)}
\begin{enumerate}
\item Kernel functions: $K_{ij}\in C^2(\R^d)$ is bounded, Lipschitz continuous
with Lipschitz constant $L_{ij}>0$, and has a bounded second derivative.
\item Potential functions: $V_i\in C^2(\R^d)$,
and there exist $C_V>0$, $q_i>0$ such that for all $x\in\R^d$,
$$
  |\na V_i(x)|+|D^2V_i(x)| \le C_V(1+|x|^{q_i}), \quad i=1,\ldots,n.
$$
\item Strong convexity: The function $x\mapsto V_i(x)
- r_i|x|^2/2$ is convex, where $r_i>2\sum_{j=1}^n$ $\max\{L_{ij},L_{ji}\}$ and
$i=1,\ldots,n$.
\item Synchronous coupling: $X_i^k(0)=\widetilde{X}_i^k(0)=X_{0,i}^k$ for
$i=1,\ldots,n$, $k=1,\ldots,N_i$, where $X_{0,i}^1,\ldots,X_{0,i}^{N_i}$ are 
independent and identically distributed, and 
$X_{0,i}^k$ is $\mathbb{F}_0$-measurable with
$\E|X_{0,i}^k|^{2\max\{1,q_i\}}<\infty$.
\end{enumerate}

Under these assumptions (in particular, the Lipschitz continuity), 
standard results for stochastic differential equations \cite{KlPl92}
guarantee that \eqref{1.eq} and \eqref{1.rbm} have 
(up to $\Prob$-distinguisha\-bi\-li\-ty)
unique strong solutions. The polynomial growth conditions on $\na V_i$ and
$D^2 V_i$ are needed to prove the stability; see Lemma \ref{lem.stab1}. 
The smallness condition on the Lipschitz constants
of the kernel functions ensures that the evolution group of the deterministic part
of \eqref{1.eq} is a contraction, thus yielding error bounds uniformly in time. 

Our main result reads as follows.

\begin{theorem}[Error estimate]\label{thm.main}
Let Assumptions (A1)--(A4) hold. Then there exists
a constant $C>0$, which is independent of $(b_i,p_i)_{i=1,\ldots,n}$,
$m$, and $T$, such that
$$
  \sup_{0<t<T}\sum_{i=1}^n\|(X_i^k-\widetilde{X}_i^k)(t)\|
	\le C\sqrt{\tau}\bigg(\sum_{i=1}^n\Gamma_i\bigg)^{1/2} 
	+ C\tau(1+\theta^\gamma), \quad t>0,
$$
where 
\begin{align}
  & \theta = \frac{\max_{j=1,\ldots,n}b_j}{\min_{j=1,\ldots,n}b_j}, \quad
	\gamma = 3\big(\max\{1,q_1,\ldots,q_n\}+1\big), \label{1.theta} \\
  \Gamma_i &= \sum_{\substack{j,j'=1 \\ j,j'\neq i,\, j\neq j'}}^n
	\bigg(\frac{\max\{b_{i},b_{j},b_{j'}\}}{\max\{b_{j},b_{j'}\}}-1 \bigg)
  + \sum_{\substack{j=1\\ j\neq i}}^{n}\bigg(\frac{b_{i}-\min\{b_{i},b_{j}\}}{
	\min\{b_{i},b_{j}\}} - \frac{2-\max\{b_{i},b_{j}\}}{N_{j}} \label{1.Gamma} \\
	&\phantom{xx}{}+ \frac{b_{i}}{p_{j}\min\{b_{i},b_{j}\}}
	\bigg)  + \bigg(\frac{1}{p_{i}-1}-\frac{1}{N_{i}-1}\bigg) \ge 0.
	\quad i=1,\ldots,n, \nonumber
\end{align}
and $q_i$ is introduced in Assumption (A2).
\end{theorem}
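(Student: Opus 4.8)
The plan is to control the error process $Z_i^k:=\widetilde{X}_i^k-X_i^k$. Subtracting \eqref{1.eq} from \eqref{1.rbm} and invoking the synchronous coupling (A4), the Brownian terms cancel, so each $Z_i^k$ obeys a random ordinary differential equation driven only by the drift difference. I would split this difference by inserting the \emph{full} interaction force evaluated at the batch positions: the part $F_i^k(\widetilde X)-F_i^k(X)$ (same weights $\alpha_{ij}$, different arguments) is Lipschitz in $Z$, while the remainder is the \emph{consistency fluctuation}
$$\chi_i^k=\sum_{j=1}^n\beta_{ij}\sum_{\substack{\ell\in\C_{j,r}\\(i,k)\neq(j,\ell)}}K_{ij}(\widetilde{X}_i^k-\widetilde{X}_j^\ell)-\sum_{j=1}^n\alpha_{ij}\sum_{\substack{\ell=1\\(i,k)\neq(j,\ell)}}^{N_j}K_{ij}(\widetilde{X}_i^k-\widetilde{X}_j^\ell),$$
i.e.\ the difference between the random-batch force and the full force at the \emph{same} positions $\widetilde X$. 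The decisive point, supplied by Proposition \ref{prop.cons}, is that the scaling \eqref{1.beta} of $\beta_{ij}$ makes $\chi_i^k$ mean-zero given the past: on $(t_{m-1},t_m]$ one has $\E[\chi_i^k(t_{m-1})\mid\G_{m-1}]=0$, since $\widetilde X(t_{m-1})$ is $\G_{m-1}$-measurable while the active batch $\xi_{m-1}$ is averaged out.

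I would then derive an energy inequality for $\E\sum_{i,k}|Z_i^k|^2$. The external-potential contribution is handled by the strong convexity (A3), giving $\langle\na V_i(\widetilde X_i^k)-\na V_i(X_i^k),Z_i^k\rangle\ge r_i|Z_i^k|^2$, whereas the Lipschitz part, carrying the \emph{full} weights $\alpha_{ij}$ with $\alpha_{ij}N_j\approx1$, is bounded after symmetrizing the double sum and applying Young's inequality by a coefficient $\sum_j\max\{L_{ij},L_{ji}\}$ per particle. The hypothesis $r_i>2\sum_j\max\{L_{ij},L_{ji}\}$ therefore produces a strictly negative effective rate $-2\lambda$ with $\lambda>0$ \emph{independent of the batch parameters}, which is the reason the final estimate is uniform in $T$:
$$\frac{\d}{\d t}\,\E\sum_{i,k}|Z_i^k|^2\le-2\lambda\,\E\sum_{i,k}|Z_i^k|^2+2\,\E\sum_{i,k}\langle Z_i^k,\chi_i^k\rangle.$$

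The heart of the proof is the estimate of the accumulated source, which I would treat as a law of large numbers in time. On each step I freeze $Z_i^k(t)=Z_i^k(t_{m-1})+(Z_i^k(t)-Z_i^k(t_{m-1}))$ and likewise split $\chi_i^k$. The frozen cross term $\E\langle Z_i^k(t_{m-1}),\chi_i^k(t_{m-1})\rangle$ vanishes by the consistency above; what survives at leading order is the self-interaction of the fluctuation, because $Z_i^k(t)-Z_i^k(t_{m-1})$ itself contains $\int_{t_{m-1}}^t\chi_i^k$, producing a term of size $\tau\,\E|\chi_i^k(t_{m-1})|^2$. Here the combinatorics of sampling the batches without replacement enters: a direct second-moment computation, expanding the double sums in $\chi_i^k$ and counting how often pairs $(j,\ell)$ and triples of partner indices land in the active super-batch, separately for same-species and cross-species contributions and including the cross-covariances between two distinct partner species, yields $\E|\chi_i^k(t_{m-1})|^2\le C\Gamma_i$ with $\Gamma_i$ precisely as in \eqref{1.Gamma}. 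Balancing this source rate against the contraction $-2\lambda$ keeps $\E\sum_{i,k}|Z_i^k|^2$ of size $O(\tau\sum_i\Gamma_i)$ uniformly in time, which after a Cauchy--Schwarz step over the species gives the dominant term $C\sqrt{\tau}(\sum_i\Gamma_i)^{1/2}$.

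The remaining contribution comes from pairing $Z_i^k(t_{m-1})$ with the time increment $\chi_i^k(t)-\chi_i^k(t_{m-1})$. Since $K_{ij}$ is Lipschitz by (A1), this increment is controlled by the displacement of the positions over one step, whose drift can be as large as $\theta$ times the interaction bound; estimating the resulting moments by the stability bound of Lemma \ref{lem.stab1} together with the polynomial growth (A2) of degree $q_i$ yields the subleading term $C\tau(1+\theta^\gamma)$ with $\gamma=3(\max\{1,q_1,\dots,q_n\}+1)$ as in \eqref{1.theta}. Collecting the two contributions into a discrete Gronwall recursion over $m=1,\dots,M$ and summing the geometric series generated by the uniform rate $\lambda$ then gives the asserted time-uniform bound. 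I expect the main obstacle to be the variance estimate $\E|\chi_i^k(t_{m-1})|^2\le C\Gamma_i$: making the cross-species weights $\beta_{ij}$ and the mismatched batch numbers $b_i\neq b_j$ cancel so that the intricate expression \eqref{1.Gamma} emerges is the delicate combinatorial core, and verifying $\Gamma_i\ge0$ along the way is part of that bookkeeping.
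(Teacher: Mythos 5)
Your proposal follows essentially the same route as the paper: the same decomposition of the drift into a Lipschitz part plus the mean-zero consistency fluctuation $\chi_i^k$, the same energy estimate with the uniform contraction rate supplied by (A3), the same splitting of $\E\langle Z_i^k,\chi_i^k\rangle$ into a vanishing frozen term, a variance term of size $\tau\Gamma_i$, and a time-increment term of size $\tau(1+\theta^\gamma)$. The only cosmetic difference is that you close with a discrete Gronwall recursion whereas the paper solves a continuous differential inequality via the positive root of a quadratic; both yield the same time-uniform bound.
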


The theorem generalizes \cite[Theorem 3.1]{JLL20} to the multi-species case.
Indeed, if $n=1$, $\Gamma_1$ reduces to $1/(p_1-1)-1/(N_1-1)$ and $\theta=1$.
Then the error bound becomes $C\sqrt{\tau/(p_1-1)}+C\tau$, which corresponds
to (3.9) in \cite{JLL20}. Compared to the result in \cite{JLL20}, Theorem 
\ref{thm.main} shows the influence of the different batch sizes $b_i$ of the species.
Indeed, if the batch sizes are very different, $\theta$ is much larger than one,
which increases the constant in the error estimate. This behavior is also observed
in the numerical simulations; see Section \ref{sec.comp}.

The proof of Theorem \ref{thm.main} is based on estimates for the error process
$Z_i^k:=\widetilde{X}_i^k-X_i^k$. Since the noise terms are the same, $Z_i^k$
solves
$$
  \d Z_i^k(t) = -(\na V_i(\widetilde{X}_i^k)-\na V_i(X_i^k))\d t
	+ \sum_{j=1}^n\alpha_{ij}\sum_{\substack{\ell=1 \\ (j,k)\neq(j,\ell)}}
	\Delta K_{ij}^\ell\d t + \chi_i^k(\widetilde{X})\d t
$$
for $t_{m-1}<t\le t_m$, where $\Delta K_{ij}^\ell
:=K_{ij}(\widetilde{X}_i^k-\widetilde{X}_j^\ell)-K_{ij}(X_i^k-X_j^\ell)$
and $\chi_i^k(\widetilde{X})$ is a remainder term (defined in \eqref{2.chi} below).
An important ingredient of the proof is the computation of the variance of
$\chi_i^k$, which is more involved than in \cite{JLL20}, since the multi-species
case requires to distinguish several cases in the choice of indices $(i,k)$
and $(j,\ell)$.

A straightforward computation, detailed in Section \ref{sec.proof}, shows that
the error process satisfies
$$
  \frac12\frac{\d}{\d t}\E|Z_i^k(t)|^2 \le -\bigg(r_i - 2\sum_{j=1}^n
	\max\{L_{ij},L_{ji}\}\bigg)\E|Z_j^k(t)|^2 
	+ \E\big(\chi_i^k(\widetilde{X}(t))\cdot Z_i^k(t)\big).
$$ 
The main difficulty is the estimate of the last term. The idea is to write it
in terms of differences $Z_i^k(t)-Z_i^k(t_{m-1})$, 
$\chi_i^k(\widetilde{X}(t))-\chi_i^k(\widetilde{X}(t_{m-1}))$, and 
$\chi_i^k(\widetilde{X}(t))-\chi_i^k(X(t))$. 
These differences are estimated from the integral formulations of the differential
equations satisfied by the corresponding processes, using Assumptions (A1)--(A4)
and the stability results for $X_i^k$, $\widetilde{X}_i^k$, and $Z_i^k$.
After some computations, we arrive at the differential inequality
$$
  \frac{\d u}{\d t} \le -\min_{i=1,\ldots,n}
	\bigg(r_i-2\sum_{j=1}^n\max\{L_{ij},L_{ji}\}\bigg)u + C'(\theta)\tau(u^{1/2}+\tau)
	+ C''\tau\sum_{i=1}^n\Gamma_i,
$$
where $u=\sum_{i=1}^n\|Z_i^k\|^2$ and the constants $C'(\theta)>0$ and $C''>0$ 
do not depend on $(b_i,p_i)_{i=1,\ldots,n}$, $m$, or $T$. In view of Assumption (A2),
the first term on the right-hand side is nonpositive. 
The dependence of $C'(\theta)$ on $\theta$ arises from 
the terms involving $b_i/\min\{b_i,b_j\}$; see \eqref{1.beta}.
It follows that $u(t)$ is bounded from above by $C(\theta)\tau+C\sqrt{\tau
\sum_{i=1}^n\Gamma_i}$ for some other constants $C(\theta)>0$ and $C>0$.

\subsection{Link to related problems}

The random-batch scheme can be interpreted as a Monte--Carlo method to solve
the mean-field equations associated to \eqref{1.eq}. In the mean-field limit 
$N\to\infty$, system \eqref{1.eq} converges to 
$$
  \d\overline{X}_i = -\na V_i(\overline{X}_i)\d t
	+ \sum_{j=1}^n(K_{ij}* u_j)(\overline{X}_i)\d t + \sigma_i\d B_i, \quad
	i=1,\ldots,n,
$$
where $u_i$ is the probability density of $\overline{X}_i$ and solves the
mean-field system
$$
  \pa_t u_i = \diver(u_i\na V_i(x)) - \diver\bigg(\sum_{j=1}^n u_i(K_{ij}*u_j)\bigg)
	+ \frac{\sigma_i^2}{2}\Delta u_i\quad\mbox{in }\R^d,\,i=1,\ldots,n;
$$
see, e.g., the review \cite{JaWa17}. 
If $K_{ij}=\na k_{ij}$, it holds that $K_{ij}*u_j=k_{ij}*\na u_j$, and 
the density $u_i$ solves a nonlocal cross-diffusion system. Moreover, if
$k_{ij}=k_{ij}^\eta$ approximates the delta distribution $\delta$ according to
$k_{ij}^\eta\to a_{ij}\delta$ in ${\mathcal D}'$ as $\eta\to 0$
for some numbers $a_{ij}\ge 0$, it was shown in \cite{CDJ19} that
the limit $N\to\infty$ and $\eta\to 0$ (in a certain sense) 
leads to the local cross-diffusion system
$$
  \pa_t u_i = \diver(u_i\na V_i(x)) 
	- \diver\bigg(\sum_{j=1}^n a_{ij}u_i\na u_j\bigg)
	\quad\mbox{in }\R^d,\,i=1,\ldots,n.
$$
The mean-field limit of the random-batch method was investigated in
\cite{JiLi20}. The authors showed that the (single-species) $N$-particle system
is reduced to a $p$-particle system. This mean-field limit does not depend on the
law of large numbers, and it is different from the standard mean-field limit,
since the chaos is imposed at every time step, while in the standard limit, the
chaos is propagated to later times.

The idea of choosing particles in a random way has been exploited in kinetic
theory. For instance, subsampling was used in Monte--Carlo simulations \cite{HaTe18}
and for the symmetric Nabu algorithm, which relates to the random-batch method
for $p_i=2$ \cite{AlPa13}.

Random-batch methods can also be applied to second-order particle systems
\cite{JLS20}, many-particle Schr\"odinger equations \cite{GJP21}, and kinetic
equations \cite{LLT20}. They have been used to sample complicated or unknown
probability distributions \cite{LLL20,YeZh21}, and 
they have been combined with model predictive control
strategies to control the guiding problem for a herd of evaders \cite{KoZu20}.
In molecular dynamics, the interaction kernel is generally singular and given by,
e.g., the Coulomb or Lennard--Jones potential. This situation is excluded
in this paper because of Assumption (A1). 
However, one may split the kernel function into
(singular) short-range and (smooth) long-range parts and apply the random-batch
method only to the long-range part. This yields similar convergence results as
above but with constants depending on the end time \cite{JLS20}.
We refer to the review \cite{JiLi21} for further applications and references.

Theorem \ref{thm.main} provides the strong convergence with rate $O(\sqrt{\tau})$
of the error process. In \cite{JLL21}, 
the weak convergence with rate $O(\tau)$ is proved for the
single-species case. The proof makes use of the backward Kolmogorov equation
and the contraction of the associated semigroup in $L^\infty(\R^d)$.   
In the multi-species situation, we obtain a system of equations for which
contraction properties can be expected under Assumption (A2), but possibly in
a weaker topology. A possible wayout is to use estimates in the space
$H^s(\R^d)\subset L^\infty(\R^d)$ for $s>d/2$, derived for the mean-field limit 
\cite{CDJ19}. We leave the details to future work.

Theorem \ref{thm.main} can be generalized to particle systems with multiplicative
noise when the diffusion coefficients are Lipschitz continuous.
We can only prove stability for particle systems with {\em interacting} 
diffusion coefficients like in \cite{CDHJ20}, 
which lead in a mean-field-type limit to the
Shigesada--Kawasaki--Teramoto population model.
For details, we refer to Section \ref{sec.multi}.

The paper is organized as follows. The consistency of the scheme and stability of the 
stochastic processes $X_i^k$ and $\widetilde{X}_i^k$ are proved in 
Section \ref{sec.cons}. Section \ref{sec.est} is concerned with the control of the
error process $Z_i^k=\widetilde{X}_i^k-X_i^k$ and corresponding uniform estimates.
Theorem \ref{thm.main} is proved in Section \ref{sec.proof}. 
We comment on the error estimate for particle systems with multiplicative noise
in Section \ref{sec.multi}. Some numerical
simulations, illustrating the convergence behavior and the influence of the batch
sizes, are presented in Section \ref{sec.num}. Finally, we collect some
known results about the conditional expectation used in this paper in 
Appendix \ref{app}.

%%%%%%%%%%%%%%%%%%%%%%%%%%%%%%%%%%%%%%%%%%%%%%%%%%%%%%%%%%%%%%%%%%%%%%%%%%%%%

\section{Consistency and stability}\label{sec.cons}

We assume that Assumptions (A1)--(A4) hold.
Let $i\in\{1,\ldots,n\}$, $k\in\{1,\ldots,N_i\}$ and 
let $\widetilde{X}_i^k$ with $k\in\C_{i,r}$ be a solution to \eqref{1.rbm}.
Then $\widetilde{X}_i^k$ solves
\begin{equation}\label{1.rbm2}
  \d\widetilde{X}_i^k = -\na V_i(\widetilde{X}_i^k)\d t + \sum_{j=1}^n\alpha_{ij}
	\sum_{\substack{\ell=1 \\ (i,k)\neq(j,\ell)}}^{N_j}K_{ij}(\widetilde{X}_i^k
	- \widetilde{X}_j^\ell)\d t + \sigma_i\d B_i^k + \chi_{i}^k(\widetilde{X})\d t,
\end{equation}
where the remainder $\chi_i^k$ is defined for $x=(x_1^1,\ldots,x_n^{N_n})
\in\R^{dN_1\times\cdots\times dN_n}$ by
\begin{align}
  \chi_i^k(x) &= \sum_{j=1}^n\beta_{ij}\sum_{\substack{\ell\in\C_{j,r} \\ 
	(i,k)\neq(j,\ell)}} K_{ij}(x_i^k-x_j^\ell)
	- \sum_{j=1}^n\alpha_{ij}\sum_{\substack{\ell=1 \\ 
	(i,k)\neq (j,\ell)}}^{N_j} K_{ij}(x_i^k-x_j^\ell) \label{2.chi} \\
	&=: f_i^k(x) - g_i^k(x). \nonumber
\end{align}
The following proposition shows that the scheme is consistent.

\begin{proposition}[Consistency]\label{prop.cons}
Let $p_i\ge 2$ for $i=1,\ldots,n$ and $x=(x_1^1,\ldots,x_n^{N_n})
\in\R^{dN_1\times\cdots\times dN_n}$.
Then the expectation and variance of $\chi_i^k$, defined in \eqref{2.chi}, are
$\E(\chi_i^k(x))=0$ and
\begin{align}
  &\operatorname{Var}(\chi_i^k(x)) = \sum_{\substack{j,j'=1 \\ j,j'\neq i,\,j\neq j'}}^n
	\bigg(\frac{\max\{b_i,b_j,b_{j'}\}}{\max\{b_i,b_{j'}\}}-1\bigg)A_i^{jj'}(x) 
	\nonumber \\
	&\phantom{x}{}
	+ \sum_{\substack{j=1 \\ j\neq i}}^n\bigg(\frac{b_i-\min\{b_i,b_j\}}{\min\{b_i,b_j\}}
	- \frac{1}{N_j} + \frac{b_i}{p_j\min\{b_i,b_j\}}\bigg)A_i^j(x)
	+ \frac{\max\{b_{i},b_{j}\}-1}{N_{j}}A_{i,1}^j(x) \label{2.var} \\
	&\phantom{x}{}+ \bigg(\frac{1}{p_i-1}-\frac{1}{N_i-1}\bigg)A_i(x),\nonumber 
\end{align}
where $i=1,\ldots,n$, $k=1,\ldots,N_i$, and
\begin{align*}
	A_i^{jj'}(x) &= \frac{1}{N_jN_{j'}}\sum_{\ell=1}^{N_j}\sum_{\ell'=1}^{N_{j'}}
	K_{ij}(x_i^k-x_j^\ell)K_{ij'}(x_i^k-x_{j'}^{\ell'}), \\
	A_i^j(x) &= \frac{1}{N_{j}(N_j-1)}\sum_{\ell,\ell'=1,\,\ell\neq \ell'}^{N_j}
	K_{ij}(x_i^k-x_j^\ell)K_{ij}(x_i^k-x_{j}^{\ell'}), \\
	A_{i,1}^{j}(x) &= \frac{1}{N_{j}}\sum_{\ell=1}^{N_j}K_{ij}(x_i^k-x_j^\ell)^2, \\
	A_i(x) &= \frac{1}{N_i-2}\sum_{\ell=1,\, \ell\neq k}^{N_i}
	\bigg(K_{ii}(x_i^k-x_i^\ell) - \frac{1}{N_i-1}\sum_{\ell'=1,\,\ell'\neq k}^{N_i}
	K_{ii}(x_i^k-x_i^{\ell'})\bigg)^2.
\end{align*}
\end{proposition}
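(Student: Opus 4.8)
The plan is to reduce both assertions to combinatorial inclusion probabilities of the random partition together with linearity of expectation. I write $\chi_i^k = f_i^k - g_i^k$ as in \eqref{2.chi} and note that $g_i^k(x)$ is nonrandom (it does not involve the batching), so once I establish $\E f_i^k(x) = g_i^k(x)$ I simultaneously obtain $\E\chi_i^k(x)=0$ and the identity $\operatorname{Var}(\chi_i^k)=\E|f_i^k|^2-|g_i^k|^2$, which is what I then expand.

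For the expectation I fix the reference particle $(i,k)$ and, for each $(j,\ell)\neq(i,k)$, introduce the indicator $\mathbf 1_{j,\ell}$ of the event that $(j,\ell)$ lies in the same super-batch $\C_r$ as $(i,k)$. By symmetry of the uniform partition the batch-label of $(i,k)$ is uniform on $\{1,\dots,b_i\}$, and the partitions of different species are independent; this yields $\Prob(\mathbf 1_{i,\ell}=1)=(p_i-1)/(N_i-1)$ for $\ell\neq k$ and $\Prob(\mathbf 1_{j,\ell}=1)=\min\{b_i,b_j\}/(b_ib_j)$ for $j\neq i$. Using $N_j=b_jp_j$ one checks $\beta_{ij}\,\Prob(\mathbf 1_{j,\ell}=1)=\alpha_{ij}$ in both cases, and summing gives $\E f_i^k=g_i^k$.

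For the variance I expand $\E|f_i^k|^2=\sum_{j,j'}\beta_{ij}\beta_{ij'}\sum_{\ell,\ell'}\Prob(\mathbf 1_{j,\ell}=\mathbf 1_{j',\ell'}=1)\,K_{ij}(x_i^k-x_j^\ell)\cdot K_{ij'}(x_i^k-x_{j'}^{\ell'})$ and subtract the analogous deterministic double sum for $|g_i^k|^2$ with weights $\alpha_{ij}\alpha_{ij'}$. The heart of the matter is the pairwise probability $\Prob(\mathbf 1_{j,\ell}=\mathbf 1_{j',\ell'}=1)$, which I evaluate by summing over the shared label $t$ the product of the per-species batch probabilities, using independence across species. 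I distinguish the cases (a) $j=j'=i$; (b) exactly one of $j,j'$ equals $i$; (c) $j=j'\neq i$; and (d) $j\neq j'$ with $j,j'\neq i$, splitting (a) and (c) further into $\ell=\ell'$ and $\ell\neq\ell'$. Case (d) gives probability $\min\{b_i,b_j,b_{j'}\}/(b_ib_jb_{j'})$ and produces the $A_i^{jj'}$ terms; the diagonal and off-diagonal parts of (c), with probabilities $\min\{b_i,b_j\}/(b_ib_j)$ and $\tfrac{\min\{b_i,b_j\}}{b_i}\tfrac{p_j(p_j-1)}{N_j(N_j-1)}$, produce the $A_{i,1}^j$ and $A_i^j$ terms.

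A structural point worth isolating is case (b): the three involved indices occupy only two species, so the probability factors as $\tfrac{p_i-1}{N_i-1}\cdot\tfrac{\min\{b_i,b_{j'}\}}{b_ib_{j'}}$, and a direct computation shows $\beta_{ii}\beta_{ij'}\Prob=\alpha_{ii}\alpha_{ij'}$. Hence these mixed contributions cancel exactly against $|g_i^k|^2$, which explains why no term of the form $A_i^{ij'}$ survives in \eqref{2.var}. I expect the main obstacle to be case (a), the genuine self-interaction $j=j'=i$, where the label $r$ is fixed by the same species-$i$ partition that decides whether $\ell$ and $\ell'$ are batchmates of $k$; here the diagonal and off-diagonal probabilities are $(p_i-1)/(N_i-1)$ and $(p_i-1)(p_i-2)/((N_i-1)(N_i-2))$, and neither contribution matches $A_i$ on its own. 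To conclude I would expand the centered square defining $A_i$, namely $\sum_{\ell\neq k}(K_{ii}(x_i^k-x_i^\ell)-\overline K)^2$ with $\overline K=\tfrac{1}{N_i-1}\sum_{\ell'\neq k}K_{ii}(x_i^k-x_i^{\ell'})$, into a diagonal piece $\tfrac{1}{N_i-1}\sum_\ell|K_{ii}|^2$ minus an off-diagonal piece $\tfrac{1}{(N_i-1)(N_i-2)}\sum_{\ell\neq\ell'}K_{ii}\cdot K_{ii}$, and check that these two coefficients match the net diagonal and off-diagonal coefficients after inserting $\beta_{ii}=1/(p_i-1)$ and $\alpha_{ii}=1/(N_i-1)$; this is precisely the single-species computation of \cite{JLL20}. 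Finally I would put all the cross-species prefactors into the stated max–min form using elementary identities for minima and maxima of $b_i,b_j,b_{j'}$, which finishes the proof.
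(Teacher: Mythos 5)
Your proposal is correct and follows essentially the same route as the paper's proof: the same indicator variables for same-super-batch membership, the same singleton and pairwise inclusion probabilities (your case (d) is the paper's Case 1, (c) its Cases 2 and the diagonal term $J_2'$, (b) its Case 3, and (a) its Cases 4--5), the same observation that the mixed terms with exactly one index equal to $i$ cancel against $|g_i^k|^2$, and the same reduction of the self-interaction contribution to the centered square $A_i$ as in the single-species computation of \cite{JLL20}. All the probabilities you state agree with the paper's after simplifying with $N_j=b_jp_j$, so carrying out the announced bookkeeping would reproduce \eqref{2.var}.
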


Using definition \eqref{1.Gamma}, we can estimate the variance of $\chi_i^k(x)$
from above according to
$$
  \operatorname{Var}(\chi_i^k(x))\le 8\max_{i,j=1,\ldots,n}\|K_{ij}\|_\infty^{2}
  \sum_{k=1}^n\Gamma_k.
$$
As expected, for larger batch sizes $p_i$, the variance is smaller 
and the noise level is lower.
In the single-species case, we recover \cite[Lemma 3.1]{JLL20} since 
$$
  \operatorname{Var}(\chi_i^k(x)) = \bigg(\frac{1}{p_i-1}-\frac{1}{N_i-1}\bigg)A_i(x).
$$ 
If the species numbers and batch sizes are the same, i.e.\ $N_i=N$ and $b_i=b$
for all $i=1,\ldots,n$, it follows that
$$
  \operatorname{Var}(\chi_i^k(x)) = \bigg(\frac{1}{p}-\frac{1}{N}\bigg)
	\sum_{j=1,\, j\neq i}^n \bigg(A_i^j(x) + A_{i,1}^j(x)\bigg) 
	+ \bigg(\frac{1}{p-1}-\frac{1}{N-1}\bigg)A_i(x).
$$
We observe that the first term on the right-hand side of \eqref{2.var} vanishes. 
This means that, in case of different species numbers or batch sizes, 
the noise level is larger than in the uniform case.

\begin{proof}
The proof is similar to \cite[Lemma 3.1]{JLL20}, but since we have multiple species,
the computations are more involved. Let $i\in\{1,\ldots,n\}$ and $k\in\{1,\ldots,N_i\}$
be arbitrary but fixed.
We write $I_i^k(j,\ell)=1$ if $(i,k)$ and $(j,\ell)$ are in the same batch,
i.e., if there exists $r\ge 1$ such that $(i,k)$, $(j,\ell)\in\C_r$.
Otherwise, we set $I_i^k(j,\ell)=0$. With this notation, we can write 
$f_i^k=f_i^k(x)$, defined in \eqref{2.chi}, as
$$
  f_i^k = \sum_{j=1}^n\beta_{ij}\sum_{\ell=1}^{N_j}K_{ij}(x_i^k-x_j^\ell)
	I_i^k(j,\ell).
$$

{\em Step 1: Computation of the expection.} We claim that
\begin{equation}\label{2.EI}
  \E I_i^k(j,\ell) = \left\{\begin{array}{ll}
	\frac{p_i-1}{N_i-1} &\mbox{if }i=j, \\
	\frac{\min\{b_i,b_j\}}{b_i b_j} &\mbox{if }i\neq j.
	\end{array}\right.
\end{equation}
The case $i=j$ is proved in \cite[Lemma 3.1]{JLL20}. For $i\neq j$, we define
$a(i,k)$ as the index of the super-batch $\C_r$ that contains $(i,k)$, i.e.\
$a(i,k)=r$ if and only if $(i,k)\in\C_r$ or, equivalently, $k\in\C_{i,r}$. We have
\begin{align*}
  \Prob(I_i^k(j,\ell)=1) &= \Prob((j,\ell)\in\C_{a(i,k)})
	= \sum_{r=1}^{\min\{b_i,b_j\}}\Prob((j,\ell)\in\C_r|a(i,k)=r)
	\Prob(a(i,k)=r) \\
	&= \sum_{r=1}^{\min\{b_i,b_j\}}\Prob((j,\ell)\in\C_r)\Prob(a(i,k)=r).
\end{align*}
The distribution of a particle of a certain species is uniform with respect to
the species' batch in which it ends up, i.e.\
$\Prob(\ell\in\C_{j,r})=\Prob(\ell\in\C_{j,s})$ for all $r,s=1,\ldots,b_j$.
Consequently, we have $\Prob(\ell\in\C_{j,r})=1/b_j$ for all $r=1,\ldots,b_j$ and
$\Prob(\ell\in\C_{j,r})=0$ otherwise, since $\C_{j,r}=\emptyset$ if $r>b_j$.
This leads for $i\neq j$ to
$$
  \E I_i^k(j,\ell) = 1\cdot\Prob(I_i^k(j,\ell)=1) 
	= \min\{b_i,b_j\}\frac{1}{b_j}\,\frac{1}{b_i}.
$$
We infer from the definitions of $\alpha_{ij}$ and $\beta_{ij}$ in \eqref{1.alpha}
and \eqref{1.beta}, respectively, and from \eqref{2.EI} that 
\begin{align*}
  \E(f_i^k) &= \sum_{j=1}^n\beta_{ij}\sum_{\ell=1}^{N_j}K_{ij}(x_i^k-x_j^\ell)
	\E I_i^k(j,\ell) \\
	&= \frac{1}{N_i-1}\sum_{\ell=1,\,\ell\neq k}^{N_i}K_{ii}(x_i^k-x_i^\ell)
	+ \sum_{j=1,\, j\neq i}^n\frac{1}{N_j}\sum_{\ell=1}^{N_j}K_{ij}(x_i^k-x_j^\ell) \\
	&= \sum_{j=1}^n\alpha_{ij}\sum_{\substack{\ell=1 \\ (i,k)\neq(j,\ell)}}
	K_{ij}(x_i^k-x_j^\ell) = \E(g_i^k).
\end{align*}
This shows that $\E(\chi_i^k(x))=\E(f_i^k)-\E(g_i^k)=0$.

{\em Step 2: Preparation for the computation of the variance.}
We introduce the notation $G_j^\ell:=K_{ij}(x_i^k-x_j^\ell)I_i^k(j,\ell)$
if $(i,k)\neq(j,\ell)$ and $G_j^\ell=0$ if $(i,k)=(j,\ell)$. Then
$$
  \E(f_i^k)^2 = \sum_{j,j'=1}^n\beta_{ij}\beta_{ij'}\sum_{\ell=1}^{N_j}
	\sum_{\ell'=1}^{N_{j'}}\E(G_j^\ell G_{j'}^{\ell'}).
$$
The expectation of $G_j^\ell G_{j'}^{\ell'}$ can be written as 
\begin{align*}
  \E(G_j^\ell G_{j'}^{\ell'}) &= K_{ij}(x_i^k-x_j^\ell)K_{ij'}(x_i^k-x_{j'}^{\ell'})
	\E(I_i^k(j,\ell)I_i^k(j',\ell')) \\
	&= K_{ij}(x_i^k-x_j^\ell)K_{ij'}(x_i^k-x_{j'}^{\ell'})
	\Prob(I_i^k(j,\ell)I_i^k(j',\ell')=1).
\end{align*}
Thus, we need to calculate $\Prob(I_i^k(j,\ell)I_i^k(j',\ell')=1)$. For this, 
we distinguish several cases.

{\em Case 1: $j,j'\neq i$ and $j\neq j'$.} 
We compute, using the definition of the super-batches,
\begin{align*}
  \{I_i^k&(j,\ell)I_i^k(j',\ell')=1\}
	= \big\{(j,\ell)\in\C_{a(i,k)},\,(j',\ell')\in\C_{a(i,k)}\big\} \\
	&= \big\{\ell\in\C_{j,a(i,k)},\,\ell'\in\C_{j',a(i,k)}\big\}
	= \bigcup_{r\in\N}\big\{\ell\in\C_{j,a(i,k)},\,\ell'\in\C_{j',a(i,k)},\,
	r = a(i,k)\big\}.
\end{align*}
The random division $\xi_{m.1}$ of the batch $\C_{i,r}$ at time $t_{m-1}$
is independent of the random devision of the batches $\C_{j,r}$
and $\C_{j',r}$. Thus, we can write
\begin{align*}
  \Prob(I_i^k(j,\ell)I_i^k(j',\ell')=1) &= \sum_{r=1}^{\min\{b_i,b_j,b_{j'}\}}
	\Prob(k\in\C_{i,r})\Prob(\ell\in\C_{j,r})\Prob(\ell'\in\C_{j',r}) \\
	&= \min\{b_i,b_j,b_{j'}\}\frac{1}{b_i}\,\frac{1}{b_j}\,\frac{1}{b_{j'}}.
\end{align*}

{\em Case 2: $j,j'\neq i$, $j=j'$ and $\ell\neq \ell'$.} In this case, 
both $\ell$ and $\ell'$ are in the same batch such that
\begin{align}
  \{I_i^k(j,\ell)I_i^k(j,\ell')=1\} &= \bigcup_{r\in\N}\big\{\ell,\ell'\in\C_{j,r},\,
	r = a(i,k)\big\} \label{2.aux1} \\
	&= \bigcup_{r\in\N}\{I_j^k(j,\ell')=1\}\cap\{\ell\in\C_{j,r}\}\cap\{k\in\C_{i,r}\}.
	\nonumber
\end{align}
Because of the uniformity of the random devision (as in Case 1), we have 
$$
  \Prob(I_j^\ell(j,\ell')=1,\,\ell\in\C_{j,r})
	= \Prob(I_j^\ell(j,\ell')=1,\,\ell\in\C_{j,s})\quad\mbox{for all }1\le r,s\le b_j.
$$
Since, by \eqref{2.EI}, $\Prob(I_j^\ell(j,\ell')=1) = (p_j-1)/(N_j-1)$,
we deduce from \eqref{2.aux1} that
\begin{align*}
  \Prob(I_i^k(j,\ell)I_i^k(j',\ell')=1) &= \sum_{r=1}^{\min\{b_i,b_j\}}
	\Prob(I_j^\ell(j,\ell')=1,\,\ell\in\C_{j,r})\Prob(k\in\C_{i,r}) \\
	&= \frac{1}{b_ib_j}\sum_{r=1}^{\min\{b_i,b_j\}}\Prob(I_j^\ell(j,\ell')=1)
	= \frac{\min\{b_i,b_j\}(p_j-1)}{b_ib_j(N_j-1)}. 
\end{align*}

{\em Case 3: $j\neq i$, $j'=i$.} If $\ell=k$, it follows from the definition of
$G_j^\ell$ that $G_i^{\ell'}=G_i^k=0$. If $\ell\neq k$, the definition of $G_j^\ell$
gives
$$
  \E(G_j^\ell G_{j'}^{\ell'}) = K_{ij}(x_i^k-x_j^\ell)K_{ii}(x_i^k-x_i^{\ell'})
	\E(I_i^k(j,\ell)I_i^k(i,\ell')),
$$
and it remains the compute the expectation on the right-hand side. 
Proceeding as in the previous cases, we find that
\begin{align*}
  \Prob(I_i^k(j,\ell)I_i^k(i,\ell')=1) &= \sum_{r=1}^{\min\{b_i,b_j\}}
	\Prob(I_i^k(i,\ell')=1,\,k\in \C_{i,r})\Prob(\ell\in\C_{j,r}) \\
	&= \frac{1}{b_ib_j}\sum_{r=1}^{\min\{b_i,b_j\}}\Prob(I_i^k(i,\ell')=1)
	= \frac{\min\{b_i,b_j\}(p_i-1)}{b_ib_j(N_i-1)}.
\end{align*}

{\em Case 4: $j,j'=i$, $\ell\neq\ell'\neq k$.} We need to compute the probability
of $I_i^k(i,\ell)I_i^k(i,\ell')=1$. This case happens exactly when the indices
$\ell$, $\ell'$, and $k$ are in the same batch $\C_{i,a(i,k)}$. Similar arguments
as for $\Prob(I_i^k(i,\ell)=1)$ in the proof of Lemma 3.1 in \cite{JLL20} yield
$$
  \Prob(I_i^k(i,\ell)I_i^k(i,\ell')=1) = \frac{(p_i-1)(p_i-2)}{(N_i-1)(N_i-2)}.
$$

{\em Case 5: $j,j'=i$, $\ell=\ell'$, $\ell\neq k$.} We only need
$\E(I_i^k(i,\ell))$, which we already computed:
$$
  \Prob(I_i^k(i,\ell)=1) = \frac{p_i-1}{N_i-1}.
$$

Summarizing these five cases, we obtain $\E(f_i^k)^2=J_1+\cdots+J_5$, where
the term $J_j$ corresponds to case $j$ and
\begin{align*}
  J_1 &= \sum_{\substack{j,j'=1\\ j,j'\neq i,\,j\neq j'}}^{n}
	\frac{\min\{b_{i},b_{j},b_{j'}\} b_{i}}{N_{j}N_{j'}\min\{b_{i},b_{j}\}
	\min\{b_{i},b_{j'}\}}\sum_{\ell,\ell'=1}^{N_{j},N_{j'}}
	K_{ij}(x_{i}^{k}-x_{j}^{\ell})K_{ij'}(x_{i}^{k}-x_{j'}^{\ell'}), \\
	J_2 &= \sum_{\substack{j=1\\ j\neq i}}^{n}\frac{(p_{j}-1)b_{i}}{(N_{j}-1)N_{j}
	\min\{b_{i},b_{j}\}p_{j}}\sum_{\ell,\ell'=1,\, \ell\neq \ell'}^{N_{j}}
	K_{ij}(x_{i}^{k}-x_{j}^{\ell})K_{ij}(x_{i}^{k}-x_{j}^{\ell'}), \\
	J_2' &= \sum_{\substack{j=1\\ j\neq i}}^{n}\frac{b_{i}}{N_{j}
	\min\{b_{i},b_{j}\}p_{j}}\sum_{\ell=1}^{N_{j}}K_{ij}(x_{i}^{k}-x_{j}^{\ell})^2, \\
	J_3 &= 2\sum_{\substack{j=1\\ j\neq i}}^{n}\frac{1}{(N_{i}-1)N_{j}}
	\sum_{\substack{\ell,\ell'=1\\ \ell'\neq k}}^{N_{j},N_{i}}
	K_{ij}(x_{i}^{k}-x_{j}^{\ell})K_{ii}(x_{i}^{k}-x_{i}^{\ell'}), \\
	J_4 &= \frac{p_{i}-2}{(p_{i}-1)(N_{i}-1)(N_{i}-2)}
	\sum_{\substack{\ell,\ell'=1\\ \ell\neq \ell'}}^{N_{i}}
	K_{ii}(x_{i}^{k}-x_{i}^{\ell})K_{ii}(x_{i}^{k}-x_{i}^{\ell'}), \\
	J_5 &= \frac{1}{(p_{i}-1)(N_{i}-1)}\sum_{\ell=1}^{N_{i}}
	K_{ii}(x_{i}^{k}-x_{i}^{\ell})^2.
\end{align*}
For the term $(\E(f_i^k))^2=(\E(g_i^k))^2$, we expand the square:
\begin{align*}
  (\E(f_i^k))^2 &= \bigg(\sum_{j=1}^n\alpha_{ij}\sum_{\substack{\ell=1 \\
	(i,k)\neq(j,\ell)}}K_{ij}(x_i^k-x_j^\ell)\sum_{j'=1}^n\alpha_{ij'}
	\sum_{\substack{\ell'=1 \\	(i,k)\neq(j',\ell')}}K_{ij'}(x_i^k-x_{j'}^{\ell'})
	\bigg)^2 \\
	&= \widehat{J}_1 + \cdots + \widehat{J}_5, \quad\mbox{where} \\
  \widehat{J}_1 
	&= \sum_{\substack{j,j'=1\\ j,j'\neq i,\,j\neq j'}}^{n}\frac{1}{N_{j}N_{j'}}
	\sum_{\ell,\ell'=1}^{N_{j},N_{j'}}K_{ij}(x_{i}^{k}-x_{j}^{\ell})
	K_{ij'}(x_{i}^{k}-x_{j'}^{\ell'}), \\
	\widehat{J}_2 &= \sum_{j=1,\, j\neq i}^{n}\frac{1}{N_{j}^2}\sum_{\ell,\ell'=1}^{N_{j}}
	K_{ij}(x_{i}^{k}-x_{j}^{\ell})K_{ij}(x_{i}^{k}-x_{j}^{\ell'}), \\
	\widehat{J}_3 &= 2\sum_{j=1,\, j\neq i}^{n}\frac{1}{(N_{i}-1)N_{j}}
	\sum_{\ell=1}^{N_j}\sum_{\substack{\ell'=1\\ \ell'\neq k}}^{N_{i}}
	K_{ij}(x_{i}^{k}-x_{j}^{\ell})K_{ii}(x_{i}^{k}-x_{i}^{\ell'}), \\
	\widehat{J}_4 
	&= \frac{1}{(N_{i}-1)^2)}\sum_{\substack{\ell,\ell'=1\\ \ell\neq \ell'}}^{N_{i}}
	K_{i,i}(x_{i}^{k}-x_{i}^{\ell})K_{ii}(x_{i}^{k}-x_{i}^{\ell'}), \\
	\widehat{J}_5 
	&= \frac{1}{(N_{i}-1)^2}\sum_{\ell=1}^{N_{i}}K_{ii}(x_{i}^{k}-x_{i}^{\ell})^2.
\end{align*}
The variance of $f_i^k$ is the difference $(J_1+\cdots+J_5)
-(\widehat{J}_1+\cdots+\widehat{J}_5)$.
We observe that $J_3-\widehat{J}_3=0$ and that 
$$
  \frac{\min\{b_{i},b_{j},b_{j'}\} b_{i}}{\min\{b_{i},b_{j}\}
	\min\{b_{i},b_{j'}\}} = \frac{\max\{b_{i},b_{j},b_{j'}\}}{\max\{b_{j},b_{j'}\}},
$$
A tedious but straightforward computation yields for the other terms:
\begin{align*}
  &\operatorname{Var}(f_i^k) = \E(f_i^k)^2 - (\E f_i^k)^2 
	= (J_1-\widehat{J}_1) + (J_2+J_2'-\widehat{J}_2) 
	+ (J_4+J_5-\widehat{J}_4-\widehat{J}_5) \\
	&= \sum_{\substack{j,j'=1\\ j,j'\neq i,\,j\neq j'}}^{n}
	\bigg(\frac{\max\{b_{i},b_{j},b_{j'}\}}{\max\{b_{j},b_{j'}\}}-1\bigg)
	\frac{1}{N_{j}N_{j'}}\sum_{\ell=1}^{N_j}\sum_{\ell'=1}^{N_{j'}}
	K_{ij}(x_{i}^{k}-x_{j}^{\ell})K_{ij'}(x_{i}^{k}-x_{j'}^{\ell'}) \\
  &\phantom{xx}{}+ \sum_{j=1,\, j\neq i}^{n}\bigg(
	\frac{b_{i}-\min\{b_{i},b_{j}\}}{\min\{b_{i},b_{j}\}}
	- \frac{1}{N_{j}} + \frac{b_{i}}{\min\{b_{i},b_{j}\}p_{j}}\bigg)
	\frac{1}{N_{j}(N_{j}-1)} \\
	&\phantom{xxxx}{}\times\sum_{\ell,\ell'=1,\,\ell\neq \ell'}^{N_{j}}
	K_{ij}(x_{i}^{k}-x_{j}^{\ell})K_{ij}(x_{i}^{k}-x_{j}^{\ell'}) \\
	  &\phantom{xx}{}+ \sum_{j=1,\, j\neq i}^{n}\bigg(
	\frac{\max\{b_{i},b_{j}\}-1}{N_{j}}\bigg)
	\frac{1}{N_{j}}\sum_{\ell=1}^{N_{j}}K_{ij}(x_{i}^{k}-x_{j}^{\ell})^2 \\
  &\phantom{xx}{}+ \bigg(\frac{1}{p_{i}-1}-\frac{1}{N_{i}-1}\bigg)\frac{1}{N_{i}-2}
	\sum_{\substack{\ell=1\\k\neq \ell}}^{N_{i}}\bigg(K_{ii}(x_{i}^k-x_{j}^{\ell})
	- \frac{1}{N_{i}-1}\sum_{\substack{\ell'=1\\ \ell'\neq k}}^{N_{i}}
	K_{ii}(x_{i}^k-x_{j}^{\ell'})\bigg)^2.
\end{align*}
The right-hand side equals \eqref{2.var}, which finishes the proof.
\end{proof}

For later use, we prove the following auxiliary result, which generalizes 
Lemma 3.2 in \cite{JLL20} to the multi-species case.

\begin{lemma}\label{lem.S}
Let $i\in\{1,\ldots,n\}$, $k\in\{1,\ldots,N_i\}$, and $(i,k)\in\C_{i,r}$ for
some $r=a(i,k)\le b_i$. Let $S_j^\ell\in\R^d$ with $j$, $\ell\in\N$ be random
variables which are independent of the partitioning random variable $\xi_m$.
Then it holds
\begin{align*}
  \bigg\|\frac{1}{p_j}\sum_{\ell\in\C_{j,r}}S_j^\ell\bigg\|
	&= \max_{\ell=1,\ldots,N_j}\|S_j^\ell\|\quad\mbox{if }i\neq j, \\
	\bigg\|\frac{1}{p_j-1}\sum_{\ell\in\C_{i,r},\,\ell\neq k}S_i^\ell\bigg\|
	&= \max_{\ell=1,\ldots,N_i}\|S_i^\ell\|\quad\mbox{if }i = j, 
\end{align*}
recalling that $\|\cdot\|=(\E(\cdot)^2)^{1/2}$.
\end{lemma}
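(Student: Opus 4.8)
The plan is to reduce the claimed identity to a second-moment computation and then to evaluate the co-batching weights with the combinatorial probabilities already recorded in the proof of Proposition \ref{prop.cons}. Set $W=\frac{1}{p_j}\sum_{\ell\in\C_{j,r}}S_j^\ell$ in the case $i\neq j$, and $W=\frac{1}{p_j-1}\sum_{\ell\in\C_{i,r},\,\ell\neq k}S_i^\ell$ in the case $i=j$, so that $\|W\|^2=\E|W|^2$. First I would encode the random index set through the co-batching indicators used above, writing $W=\frac{1}{p_j}\sum_{\ell=1}^{N_j}S_j^\ell\,I_i^k(j,\ell)$ (respectively $W=\frac{1}{p_j-1}\sum_{\ell=1,\,\ell\neq k}^{N_i}S_i^\ell\,I_i^k(i,\ell)$), so that the only dependence on the partition $\xi_m$ sits in the indicators $I_i^k(j,\ell)$, while the set $\C_{j,r}=\C_{j,a(i,k)}$ is read off from $\xi_m$.

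Expanding the square and using that $(S_j^\ell)_\ell$ is independent of $\xi_m$ to factorize the product of the external randomness and the co-batching indicators, I obtain the exact identity
$$
  \E|W|^2 = \frac{1}{p_j^2}\sum_{\ell,\ell'=1}^{N_j}\E\big(S_j^\ell\cdot S_j^{\ell'}\big)\,
  \Prob\big(I_i^k(j,\ell)=I_i^k(j,\ell')=1\big)
$$
(with $1/(p_j-1)^2$ and the indices $I_i^k(i,\cdot)$ in the case $i=j$). This is the step where the hypothesis of independence from $\xi_m$ is used, and it is the only place where the law of the $S_j^\ell$ and the randomness of the batches interact. The co-batching weights are then read off directly: for $i\neq j$ the diagonal weight is $\Prob(I_i^k(j,\ell)=1)=\min\{b_i,b_j\}/(b_ib_j)$ from \eqref{2.EI} and the off-diagonal weight ($\ell\neq\ell'$) is $\min\{b_i,b_j\}(p_j-1)/(b_ib_j(N_j-1))$ from the Case~2 computation; for $i=j$ the diagonal weight is $(p_i-1)/(N_i-1)$ and the off-diagonal weight is $(p_i-1)(p_i-2)/((N_i-1)(N_i-2))$, the Case~4 probability. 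A subtlety special to the multi-species setting is that when $r=a(i,k)>b_j$ the batch $\C_{j,r}$ is empty; this is already built into the diagonal weight $\min\{b_i,b_j\}/(b_ib_j)$, and I would flag it explicitly.

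The decisive step is to collapse the resulting weighted combination of the moments $\E(S_j^\ell\cdot S_j^{\ell'})$ onto the single quantity $\max_\ell\|S_j^\ell\|^2$. The engine is the combinatorial identity that, after the normalization, the co-batching weights sum to exactly one in the case $i=j$ — the diagonal part contributes $1/(p_i-1)$ and the off-diagonal part contributes $(p_i-2)/(p_i-1)$, summing to $1$ — together with the total mass $\min\{b_i,b_j\}/b_i$ in the case $i\neq j$, obtained after substituting $N_j=b_jp_j$. I expect this collapse to be the main obstacle: it is exactly the point at which the clean right-hand side $\max_\ell\|S_j^\ell\|$ must emerge from the full double sum, so the bookkeeping of the diagonal and off-diagonal weights has to be combined with the control of the correlations $\E(S_j^\ell\cdot S_j^{\ell'})$ by their maximal value. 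The case $i\neq j$ is where this is most delicate, since the total mass $\min\{b_i,b_j\}/b_i$ already differs from one unless $b_i\le b_j$; establishing the stated reduction \emph{as an identity}, rather than merely identifying $\max_\ell\|S_j^\ell\|$ as the governing scale, is therefore the part of the argument on which I would concentrate the effort.
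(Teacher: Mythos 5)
Your reduction is exactly the paper's: encode the random batch through the co-batching indicators $I_i^k(j,\ell)$, use the independence of $(S_j^\ell)$ from $\xi_m$ to factor
$\E\big(I_i^k(j,\ell)I_i^k(j,\ell')\,S_j^\ell\cdot S_j^{\ell'}\big)
 =\E\big(I_i^k(j,\ell)I_i^k(j,\ell')\big)\,\E\big(S_j^\ell\cdot S_j^{\ell'}\big)$,
and read off the weights from \eqref{2.EI} and Case~2 (respectively Cases~4 and~5) of the proof of Proposition~\ref{prop.cons}. Your bookkeeping of the weights is also correct: after normalization they total $\min\{b_i,b_j\}/b_i$ when $i\neq j$ and exactly $1$ when $i=j$.

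Where you go astray is the final paragraph, in which you take the stated \emph{equality} at face value and declare the ``collapse'' onto $\max_\ell\|S_j^\ell\|$ the hard step to be established. The ``$=$'' in the lemma is a misprint: the conclusion should read ``$\le$'', which is what the paper's proof actually establishes and the only form ever invoked later (in \eqref{3.aux4} and in Step~4 of the proof of Theorem~\ref{thm.main}). Equality is false in general --- for i.i.d.\ centered $S_j^\ell$ your own second-moment identity gives $\|W\|\le p_j^{-1/2}\max_\ell\|S_j^\ell\|$, and your observation that the total weight is $\min\{b_i,b_j\}/b_i<1$ when $b_i>b_j$ is a second disproof, so the obstacle you propose to concentrate on cannot be overcome. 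Once the statement is read as an inequality, your computation finishes in one line, exactly as in the paper: bound $\E(S_j^\ell\cdot S_j^{\ell'})\le\|S_j^\ell\|\,\|S_j^{\ell'}\|\le\max_\ell\|S_j^\ell\|^2$ by the Cauchy--Schwarz inequality, drop the factor $\min\{b_i,b_j\}/b_i\le 1$ from the weights, and use $N_j=b_jp_j$ to see that the normalized weights sum to at most $(p_j-1)/p_j+1/p_j=1$. So your route coincides with the paper's and is correct up to the last step; the residual difficulty you isolate is an artifact of the misstated conclusion, not a genuine gap in the method.
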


\begin{proof}
The proof is similar to that one of \cite[Lemma 3.2]{JLL20}. We present it for
completeness. Let $i\neq j$ and 
set $I_i^k(j,\ell)=1$ if $(i,k)$ and $(j,\ell)$ are in same batch
and $I_i^k(j,\ell)=0$ otherwise. Due to the independency of $S_j^\ell$ and $\xi_m$,
we have
\begin{align*}
  \bigg\|\frac{1}{p_j}\sum_{\ell\in\C_{j,r}}S_j^\ell\bigg\|^2
	&= \frac{1}{p_j^2}\E\bigg(\sum_{\ell=1}^{N_j}I_i^k(j,\ell)S_j^\ell\bigg)^2
	= \frac{1}{p_j^2}\sum_{\ell,\ell'=1}^{N_j}\E\big(I_i^k(j,\ell)I_i^k(j,\ell')
	S_j^\ell S_j^{\ell'}\big) \\
	&= \frac{1}{p_j^2}\sum_{\ell,\ell'=1}^{N_j}\E(I_i^k(j,\ell)I_i^k(j,\ell'))
	\E(S_j^\ell S_j^{\ell'}).
\end{align*}
We know from \eqref{2.EI} that $\E(I_i^k(j,\ell)I_i^k(j,\ell'))\le 1/b_j$ 
in the case of $\ell = \ell'$ and from Case 2 of Proposition \ref{prop.cons} that 
$\E(I_i^k(j,\ell)I_i^k(j,\ell'))\le (p_{j}-1)/(b_{j}(N_{j}-1))$, if 
$\ell \ne \ell'$. Therefore, using the Cauchy--Schwarz inequality and the fact 
that $N_j=b_jp_j$,
\begin{align*}
  \bigg\|\frac{1}{p_j}\sum_{\ell\in\C_{j,r}}S_j^\ell\bigg\|^2
	&\le \frac{1}{p_j^2}\bigg(\sum_{\ell,\ell'=1,\,\ell\neq \ell'}^{N_j}
	\frac{p_{j}-1}{b_j(N_{j}-1)}\|S_j^\ell\|\,\|S_j^{\ell'}\|
	+ \sum_{\ell=1}^{N_{j}}\frac{1}{b_{j}}\|S_j^\ell\|^2\bigg) \\
	&\le \max_{\ell=1,\ldots,N_j}\|S_j^\ell\|^2
	\bigg(\frac{(p_{j}-1)(N_{j}-1)N_{j}}{p_{j}N_{j}(N_{j}-1)}+\frac{1}{p_{j}}\bigg)
	\le \max_{\ell=1,\ldots,N_j}\|S_j^\ell\|^2.
\end{align*}
The case $i=j$ is shown in a similar way.
\end{proof}

The next result is concerned with the stability of $X_i^k$ and $\widetilde{X}_i^k$.

\begin{lemma}[Stability]\label{lem.stab1}
Let $q\ge 2$, and $X_{0,i}^k\in L^q(\Omega)$,
where $i\in\{1,\ldots,n\}$ and $k\in\{1,\ldots,N_i\}$. Then there exist constants
$C(q)$, $C_1>0$, independent of $(p_i,b_i)_{i=1,\ldots,n}$, $m$, and $T$, such that
\begin{align}
  & \sup_{t>0}\E|X_i^k(t)|^q \le C(q), \quad
	\sup_{t>0}\E|\widetilde{X}_i^k(t)|^q \le C(q)(1+\theta^q), \label{3.estX} \\
	& \sup_{t_{m-1}<t<t_m}\E\big(|\widetilde{X}_i^k(t)|^q\big|\F_{m-1}\big)
	\le |\widetilde{X}_i^k(t_{m-1})|^q + C(q)(1+\theta^q), \label{3.estXt}
\end{align}
where $\theta$ is defined in \eqref{1.theta}. Furthermore, it holds that
\begin{equation}\label{3.estXXt}
  \big|\E\big(\widetilde{X}_i^k(t)-\widetilde{X}_i^k(t_{m-1})\big|\F_{m-1}\big)\big|
	\le C_V\tau|\widetilde{X}_i^k(t_{m-1})|^{\widetilde{q}_i} +
	C_1\tau(1+\theta^{\widetilde{q}_i}),
\end{equation}
where $\widetilde{q}_i=\max\{2,q_i\}$ and $C_V>0$ is introduced in Assumption (A2).
\end{lemma}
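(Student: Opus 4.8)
The plan is to obtain all four estimates from It\^o's formula applied to $x\mapsto|x|^q$ (for $q\ge 2$, where this function is $C^2$), combined with the strong monotonicity encoded in Assumption (A3), the boundedness of the kernels from (A1), and Gr\"onwall's lemma; the increment bound \eqref{3.estXXt} is then read off from the integral form of \eqref{1.rbm}. First I would record two preliminary facts. From (A3), convexity of $x\mapsto V_i(x)-r_i|x|^2/2$ gives the monotonicity $(\na V_i(x)-\na V_i(y))\cdot(x-y)\ge r_i|x-y|^2$, and choosing $y=0$ yields the dissipativity estimate $-\na V_i(x)\cdot x\le -r_i|x|^2+|\na V_i(0)|\,|x|$. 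Second, I would record the interaction bounds, which are uniform over all batch configurations: for the full system, since $\alpha_{ij}(N_j-\delta_{ij})=1$, one has $\sum_j\alpha_{ij}\sum_\ell|K_{ij}(\cdot)|\le\sum_j\|K_{ij}\|_\infty=:C_K$; for the batch system, counting $|\C_{j,r}|=p_j$ (respectively $p_i-1$ terms when $i=j$) and inserting \eqref{1.beta} gives $\beta_{ij}|\C_{j,r}|=b_i/\min\{b_i,b_j\}\le\theta$ (respectively $=1$), so the batch interaction drift is bounded by $\theta C_K$. This factor $\theta$ is exactly the source of the $\theta^q$ appearing in \eqref{3.estX}--\eqref{3.estXt}.

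For the first bound in \eqref{3.estX} I would apply It\^o to $|X_i^k|^q$, take expectations, and insert the dissipativity, the interaction bound $C_K$, and the It\^o correction $\tfrac12\sigma_i^2 q(d+q-2)|X_i^k|^{q-2}$. The subleading powers $|X_i^k|^{q-1}$ and $|X_i^k|^{q-2}$ are absorbed into $\varepsilon\E|X_i^k|^q+C(\varepsilon)$ by Young's inequality, and choosing $\varepsilon=qr_i/2$ gives $\frac{\d}{\d t}\E|X_i^k|^q\le-\frac{qr_i}{2}\E|X_i^k|^q+C$, so that Gr\"onwall yields the uniform-in-time constant $C(q)$. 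The second bound in \eqref{3.estX} is identical except that the interaction is bounded by $\theta C_K$; Young's inequality then produces a constant of size $C(q)\theta^q$, giving $C(q)(1+\theta^q)$. For \eqref{3.estXt} I would run the same computation conditionally on $\F_{m-1}$: on $(t_{m-1},t_m]$ the partition is $\F_{m-1}$-measurable and the Brownian increments are independent of $\F_{m-1}$, so It\^o's formula passes through with conditional expectations, and integrating the resulting differential inequality over an interval of length $\le\tau$ (bounding $e^{-qr_i(t-t_{m-1})/2}\le 1$) yields the claimed $|\widetilde{X}_i^k(t_{m-1})|^q+C(q)(1+\theta^q)$.

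Finally, for \eqref{3.estXXt} I would use the integral form of \eqref{1.rbm} on $(t_{m-1},t]$ and apply $\E(\cdot|\F_{m-1})$. The stochastic integral has conditional mean zero because $B_i^k(t)-B_i^k(t_{m-1})$ is independent of $\F_{m-1}$; the batch interaction integral is bounded in modulus by $\theta C_K\tau\le C_1\tau(1+\theta^{\widetilde{q}_i})$; and the potential integral is controlled via (A2) by $C_V\int_{t_{m-1}}^t(1+\E(|\widetilde{X}_i^k(s)|^{q_i}|\F_{m-1}))\,\d s$, into which I substitute \eqref{3.estXt} with $q=\widetilde{q}_i=\max\{2,q_i\}$ (using conditional Jensen in the case $q_i<2$) to extract the leading term $C_V\tau|\widetilde{X}_i^k(t_{m-1})|^{\widetilde{q}_i}$ together with $C_1\tau(1+\theta^{\widetilde{q}_i})$. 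The main obstacle throughout is that $\na V_i$ grows only polynomially rather than being Lipschitz, so the $L^q$ bounds cannot be closed from the interaction alone: it is precisely the strong monotonicity from (A3) that supplies the coercive term $-r_i|x|^2$ needed to absorb this growth and to make the bounds uniform in both $t$ and $T$. A secondary bookkeeping point is tracking the $\theta$-dependence and the case distinction $q_i\ge 2$ versus $q_i<2$ hidden in the exponent $\widetilde{q}_i$.
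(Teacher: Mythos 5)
Your proposal is correct and follows essentially the same route as the paper: It\^o's formula for $|x|^q$, the dissipativity $x\cdot\na V_i(x)\ge r_i|x|^2+x\cdot\na V_i(0)$ extracted from the strong convexity assumption, the uniform interaction bounds $C_K$ and $\theta C_K$ coming from $\beta_{ij}|\C_{j,r}|=b_i/\min\{b_i,b_j\}\le\theta$, Young plus Gr\"onwall for the moment bounds (conditionally on $\F_{m-1}$ for the batch process), and the integral form of \eqref{1.rbm} with the vanishing conditional mean of the stochastic integral for \eqref{3.estXXt}. The only cosmetic difference is that you derive the dissipativity from monotonicity of $\na V_i$ rather than the mean-value theorem for $D^2V_i$, which is equivalent.
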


\begin{proof}
Let $i\in\{1,\ldots,n\}$ and $k\in\{1,\ldots,N_i\}$ be arbitrary but fixed. 
The proof is similar to \cite[Lemma 3.3]{JLL20} with the exception that we
work out the dependence on the number of batches $b_i$ in terms of the quotient
$\theta$.

{\em Step 1: Stability for $X_i^k(t)$.} Let $d\ge 2$.
We use It\^o's calculus for the process
$|X_i^k|^q$ and apply the expectation as in \cite[Lemma 3.3]{JLL20}, which yields
\begin{align*}
  \frac{\d}{\d t}\E|X_i^k(t)|^q &= -q\E\big(|X_i^k(t)|^{q-2}X_i^k(t)
	\cdot\na V_i(X_i^k(t))\big) \\
	&\phantom{xx}{}+ \frac{q}{N_i-1}\E\bigg(|X_i^k(t)|^{q-2}X_i^k(t)\cdot
	\sum_{\ell=1,\,\ell\neq k}^{N_i}K_{ii}(X_i^k(s)-X_i^\ell(s))\bigg) \\
	&\phantom{xx}{}
	+ \sum_{j=1,\,j\neq i}^n\frac{q}{N_j}\E\bigg(|X_i^k(t)|^{q-2}X_i^k(t)\cdot
	\sum_{\ell=1}^{N_j}K_{ij}(X_i^k(s)-X_j^\ell(s))\bigg) \\
	&\phantom{xx}{}+ \frac{\sigma_i^2}{2}q(q+d-2)\E|X_i^k(t)|^{q-2}.
\end{align*}
The mean-value theorem with intermediate value $\zeta\in\R^d$ and the 
convexity of $x\mapsto V_i(x)-r_i|x|^2/2$ (Assumption (A2)) imply that for all
$x\in\R^d$,
$$
  x\cdot\na V_i(x) = x^T D^2V_i(\zeta)x + x\cdot\na V_i(0) 
  \ge r_i|x|^2 + x\cdot\na V_i(0).
$$
Together with Fubini's theorem, the boundedness of the kernels $K_{ij}$
(Assumption (A1)), and Young's inequality, it follows that
\begin{align*}
  \frac{\d}{\d t}\E|X_i^k(t)|^q &\le -qr_i\E|X_i^k(t)|^q
	+ q\bigg(|\na V_i(0)|  \sum_{j=1}^n\|K_{ij}\|_\infty\bigg)\E|X_i^k(t)|^{q-1} \\
	&\phantom{xx}{}+ \frac{\sigma_i^2}{2}q(q+d-2)\E|X_i^k(t)|^{q-2}
	\le -\frac{qr_i}{2}\E|X_i^k(t)|^q + C_2,
\end{align*}
where $C_2>0$ depends on $\na V_i$, $K_{ij}$, $\sigma_i$, $d$, and $q$.
Gronwall's lemma implies that $\E|X_i^k(t)|^q$ is bounded by a constant
depending on $q$ (and not depending on $T$). 

{\em Step 2: Stability for $\widetilde{X}_i^k$.} Let $t\in(t_{m-1},t_m]$
and let $(i,k)\in\C_r$ for some $r\in\N$. Similarly as in the previous step,
we use It\^o's calculus and apply the conditional expectation with respect to
$\F_{m-1}$, observing that $|\widetilde{X}_i^k(t_{m-1})|^q$ is $\F_{m-1}$-measurable.
Then, applying Lemmas \ref{lem.cond1} and \ref{lem.cond2} in the appendix,
\begin{align*}
  \frac{\d}{\d t}&\E\big(|\widetilde{X}_i^k(t)|^q\big|\F_{m-1}\big)
	= -q\E\big(|\widetilde{X}_i^k(t)|^{q-2}\widetilde{X}_i^k
	\cdot\na V_i(\widetilde{X}_i^k(t))\big|\F_{m-1}\big) \\
	&{}+ \frac{q}{p_i-1}\sum_{\ell\in\C_{i,r},\,\ell\neq k}
	\E\big(|\widetilde{X}_i^k(t)|^{q-2}	\widetilde{X}_i^k(t)\cdot 
	K_{ii}(\widetilde{X}_i^k-\widetilde{X}_i^\ell)\big|\F_{m-1}\big) \\
	&{}+ \sum_{j=1,\,j\neq i}^n\frac{qb_i}{p_j\min\{b_i,b_j\}}\sum_{\ell\in\C_{j,r}}
	\E\big(|\widetilde{X}_i^k(t)|^{q-2}\widetilde{X}_i^k(t)\cdot 
	K_{ij}(\widetilde{X}_i^k(t)-\widetilde{X}_j^\ell(t))\big|\F_{m-1}\big) \\
	&{}+ \frac{\sigma_i^2}{2}q(q+d-2)
	\E\big(|\widetilde{X}_i^k(t)|^{q-2}\big|\F_{m-1}\big).
\end{align*}
Proceeding as in the previous step and using $b_i/\min\{b_i,b_j\}\le\theta$,
we infer that
$$
  \frac{\d}{\d t}\E\big(|\widetilde{X}_i^k(t)|^q\big|\F_{m-1}\big)
	\le -\frac{qr_i}{2}\E\big(|\widetilde{X}_i^k(t)|^{q}\big|\F_{m-1}\big) 
	+ C_3(1+\theta)^q,
$$
and Gronwall's lemma on $(t_{m-1},t_m]$ implies \eqref{3.estXt}. 
Finally, the second estimate in \eqref{3.estX} is proved in a similar way,
using the Gronwall lemma on $[0,t]$ and taking into account that
$\E|X_{0,i}^k|^q$ is bounded by assumption.

{\em Step 3: Proof of estimate \eqref{3.estXXt}.} We apply It\^o's lemma, 
take the conditional expectation
of $\widetilde{X}_i^k(t)-\widetilde{X}_i^k(t_{m-1})$, and use the polynomial growth
condition for $\na V_i$ in Assumption (A2) as well as the boundedness of $K_{ij}$:
\begin{align*}
  \E\big(&\widetilde{X}_i^k(t)-\widetilde{X}_i^k(t_{m-1})\big|\F_{m-1}\big)
	= -\int_{t_{m-1}}^t\E\big(\na V_i(\widetilde{X}_i^k(s))\big|\F_{m-1}\big)\d s \\
	&\phantom{xx}{}
	+ \frac{1}{p_i-1}\int_{t_{m-1}}^t\E\bigg(\sum_{\ell\in\C_{i,r},\,\ell\neq k}
	K_{ii}(\widetilde{X}_i^k(s)-\widetilde{X}_i^\ell(s))\big|\F_{m-1}\bigg)\d s \\
	&\phantom{xx}{}
	+ \sum_{j=1,\, j\neq i}\frac{b_i}{p_j\min\{b_i,b_j\}}\int_{t_{m-1}}^t
	\E\bigg(\sum_{\ell\in\C_{j,r}}K_{ij}(\widetilde{X}_i^k(s)-\widetilde{X}_j^\ell(s))
	\big|\F_{m-1}\bigg)\d s \\
  &\le C_V\tau + C_V\int_{t_{m-1}}^t\E\big(|\widetilde{X}_i^k(s)|^{q_i}\big|\F_{m-1}
	\big)\d s + \tau\sum_{j=1}^n\frac{\|K_{ij}\|_\infty b_i}{\min\{b_i,b_j\}}.
\end{align*}
It follows from \eqref{3.estXt} with $q=\widetilde{q}_i:=\max\{2,q_i\}$ that
\begin{align*}
  \E\big(&\widetilde{X}_i^k(t)-\widetilde{X}_i^k(t_{m-1})\big|\F_{m-1}\big) \\
	&\le C_V\tau|\widetilde{X}_i^k(t_{m-1})|^{\widetilde{q}_i}
	+ \tau\bigg(C_V + C_V C(\widetilde{q}_i)(1+\theta^{\widetilde{q}_i}) + \sum_{j=1}^n
	\frac{\|K_{ij}\|_\infty b_i}{\min\{b_i,b_j\}}\bigg) \\
	&\le C_V\tau|\widetilde{X}_i^k(t_{m-1})|^{\widetilde{q}_i}
	+ C_4(\widetilde{q}_i)(1+\theta^{\widetilde{q}_i}).
\end{align*}
This completes the proof.
\end{proof}

%%%%%%%%%%%%%%%%%%%%%%%%%%%%%%%%%%%%%%%%%%%%%%%%%%%%%%%%%%%%%%%%%%%%%%%%

\section{Control of the error process}\label{sec.est}

We prove first a bound for the difference
$\widetilde{X}_i^k(t)-\widetilde{X}_i^k(t_{m-1})$.

\begin{lemma}\label{lem.stab2}
Let $t\in(t_{m-1},t_m]$, let $\widetilde{X}$ be the stochastic process defined in
\eqref{1.rbm}, and let $i\in\{1,\ldots,n\}$. Set $q_i'=2\max\{1,q_i\}$, where
$q_i$ is defined in Assumption (A2). Then, for any $(i,k)\in\C_r$ for some $r\le b_i$
such that $X_{0,i}^k\in L^{q_i'}(\Omega)$, there exists a constant $C>0$, independent of
$(p_i,b_i)_{i=1,\ldots,n}$, and $\xi_m$, such that
$$
  \E\big(|\widetilde{X}_i^k(t)-\widetilde{X}_i^k(t_{m-1})|^2\big|\F_{m-1}\big)
	\le C\tau(1+\theta^{q_i'/2+1})\big(1 + |\widetilde{X}_i^k(t_{m-1})|^{q'_i/2+1}\big).
$$
\end{lemma}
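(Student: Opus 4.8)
The plan is to work from the integral form of \eqref{1.rbm} on $(t_{m-1},t]$ and to split the increment $\widetilde{X}_i^k(t)-\widetilde{X}_i^k(t_{m-1})$ into the external-force part $P=-\int_{t_{m-1}}^t\na V_i(\widetilde{X}_i^k(s))\,\d s$, the batch-interaction part $Q=\int_{t_{m-1}}^t\sum_{j=1}^n\beta_{ij}\sum_{\ell\in\C_{j,r},\,(i,k)\neq(j,\ell)}K_{ij}(\widetilde{X}_i^k(s)-\widetilde{X}_j^\ell(s))\,\d s$, and the noise increment $R=\sigma_i(B_i^k(t)-B_i^k(t_{m-1}))$. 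Using $|P+Q+R|^2\le 3(|P|^2+|Q|^2+|R|^2)$ and taking $\E(\,\cdot\,|\,\F_{m-1})$, I would reduce the claim to bounding the three pieces, the first two being elementary and the third carrying the whole difficulty.

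Since the Brownian increment is independent of $\F_{m-1}$ and $B_i^k$ is $d$-dimensional, the noise term produces the leading contribution $\E(|R|^2\,|\,\F_{m-1})=\sigma_i^2 d\,(t-t_{m-1})\le\sigma_i^2 d\,\tau$, with no dependence on $\widetilde{X}_i^k(t_{m-1})$; this is the source of the linear factor $\tau$. For $Q$ the key point is the scaling in \eqref{1.beta}: the inner sum carries $p_i-1$ terms when $j=i$ and at most $p_j$ terms when $j\neq i$, and since $\beta_{ii}(p_i-1)=1$ and $\beta_{ij}p_j=b_i/\min\{b_i,b_j\}\le\theta$, the boundedness of the kernels (Assumption (A1)) gives $|Q|\le C\theta\tau$ and hence $\E(|Q|^2\,|\,\F_{m-1})\le C\theta^2\tau^2\le C\tau\,\theta^{q_i'/2+1}$, using $\tau\le 1$ and $q_i'/2+1\ge 2$.

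The term $P$ is the crux. Here I would use the polynomial growth $|\na V_i(x)|\le C_V(1+|x|^{q_i})$ from Assumption (A2), apply the Cauchy--Schwarz inequality in time to obtain $|P|^2\le\tau\int_{t_{m-1}}^t|\na V_i(\widetilde{X}_i^k(s))|^2\,\d s\le 2C_V^2\tau\int_{t_{m-1}}^t(1+|\widetilde{X}_i^k(s)|^{2q_i})\,\d s$, and then take the conditional expectation and feed in the moment bound \eqref{3.estXt} of Lemma \ref{lem.stab1} (with exponent $\max\{2,2q_i\}$) to replace $\E(|\widetilde{X}_i^k(s)|^{2q_i}\,|\,\F_{m-1})$ by $|\widetilde{X}_i^k(t_{m-1})|^{(\cdot)}+C(1+\theta^{(\cdot)})$, uniformly in $s\in(t_{m-1},t_m]$; the surplus factor $\tau$ is then absorbed via $\tau^2\le\tau$. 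I expect the main obstacle to be the exponent bookkeeping: the naive second-moment estimate produces the power $2q_i$ of $|\widetilde{X}_i^k(t_{m-1})|$, whereas the statement asks for the smaller power $q_i'/2+1=\max\{1,q_i\}+1$. Getting there should require replacing the crude square by the sharper conditional-mean estimate \eqref{3.estXXt}, whose exponent is only $\widetilde{q}_i=\max\{2,q_i\}$, for the part of $P$ aligned with the $\F_{m-1}$-measurable vector $\na V_i(\widetilde{X}_i^k(t_{m-1}))$, and then using Young's inequality to trade the remaining power of $|\widetilde{X}_i^k(t_{m-1})|$ against the spare factor of $\tau$. As an alternative, the strong convexity (A3) can be exploited in an It\^o-based argument: it generates the dissipative term $-2r_i|\widetilde{X}_i^k(t)-\widetilde{X}_i^k(t_{m-1})|^2$, after which a Gronwall estimate on $(t_{m-1},t]$ with vanishing initial value directly delivers the $O(\tau)$ rate. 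Either way, all constants stay independent of $m$, $T$, and of $(p_i,b_i)$ except through $\theta$, which yields the stated uniformity; collecting the three contributions then gives the asserted estimate.
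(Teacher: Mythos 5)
Your treatment of the noise increment $R$ and of the batch-interaction term $Q$ is fine (the identities $\beta_{ii}(p_i-1)=1$ and $\beta_{ij}p_j=b_i/\min\{b_i,b_j\}\le\theta$ are exactly the right observation), but the handling of the potential term $P$ has a genuine gap, and it is precisely the point you flag as "exponent bookkeeping." Squaring $P$ via Cauchy--Schwarz in time forces the quantity $|\na V_i(\widetilde X_i^k(s))|^2\lesssim 1+|\widetilde X_i^k(s)|^{2q_i}$, and after conditioning with \eqref{3.estXt} you land on $C\tau^2\big(1+\theta^{q_i'}+|\widetilde X_i^k(t_{m-1})|^{q_i'}\big)$ with $q_i'=2\max\{1,q_i\}$. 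For $q_i>1$ this exponent strictly exceeds the claimed $q_i'/2+1=q_i+1$, and no amount of trading against the spare $\tau$ can fix it: the required pointwise inequality $\tau\,|\widetilde X_i^k(t_{m-1})|^{2q_i}\le C\big(1+|\widetilde X_i^k(t_{m-1})|^{q_i+1}\big)$ fails on the event where $|\widetilde X_i^k(t_{m-1})|$ is large, since $\tau$ is a fixed deterministic number while the random variable is unbounded; Young's inequality cannot lower the exponent of an unbounded random factor. Your proposed rescue via \eqref{3.estXXt} also does not work, because that is a bound on the conditional \emph{first} moment $|\E(S\,|\,\F_{m-1})|$, which by Jensen is dominated by, and hence gives no control of, the conditional second moment you need.

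The paper avoids this by never squaring the drift: it applies It\^o's formula to $|S(t)|^2$ with $S=\widetilde X_i^k(\cdot)-\widetilde X_i^k(t_{m-1})$, so the potential enters only through the cross term $S\cdot\na V_i(\widetilde X_i^k)$. A Cauchy--Schwarz in $\Omega$ then pairs a single power of the gradient, contributing $\big(\E(1+|\widetilde X_i^k|^{2q_i}\,|\,\F_{m-1})\big)^{1/2}\lesssim 1+\theta^{q_i'/2}+|\widetilde X_i^k(t_{m-1})|^{q_i'/2}$, with a single power of $\big(\E(|S|^2\,|\,\F_{m-1})\big)^{1/2}\lesssim 1+\theta+|\widetilde X_i^k(t_{m-1})|$ coming from the crude a priori bound \eqref{3.estXt} with $q=2$; the product of these two factors is exactly where the exponent $q_i'/2+1$ comes from, and the time integral over $(t_{m-1},t]$ supplies the factor $\tau$. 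Your second "alternative" via strong convexity is closer to this, but as you sketch it (Young's inequality on the cross term $|S|\,|\na V_i(\widetilde X_i^k(t_{m-1}))|$) it again produces $|\na V_i|^2$ and hence the exponent $q_i'$; the essential step you are missing is to estimate that cross term by Cauchy--Schwarz in $\Omega$ and to reuse the boundedness of $\big(\E(|S|^2\,|\,\F_{m-1})\big)^{1/2}$ itself as the second factor, rather than completing the square.
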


\begin{proof}
Again, the proof is similar to \cite[Lemma 3.3]{JLL20} and based on It\^o's calculus.
Let $t\in(t_{m-1},t_m]$ and $(i,k)\in\C_r$ for some $r\le b_i$, satisfying the
assumptions of the lemma. Set $S(t):=\widetilde{X}_i^k(t)-\widetilde{X}_i^k(t_{m-1})$.
We apply It\^o's lemma to $|S(t)|^2$ and the conditional expectation and use
Lemmas \ref{lem.cond1} and \ref{lem.cond2}:
\begin{align}
  \E&(|S(t)|^2|\F_{m-1}) \le 2\int_{t_{m-1}}^t\big|\E\big(S(s)\cdot\na V_i
	(\widetilde{X}_i^k(s))\big|\F_{m-1}\big)\big|\d s + d\int_{t_{m-1}}^t\sigma_i^2\d s 
	\label{3.aux1} \\
	&\phantom{xx}{}
	+ \frac{2}{p_i-1}\int_{t_{m-1}}^t\bigg|\E\bigg(\sum_{\ell\in\C_{i,r},\,\ell\neq k}
	K_{ii}(\widetilde{X}_i^k(s)-\widetilde{X}_i^\ell(t))\cdot S(s)\bigg|\F_{m-1}\bigg)
	\bigg|\d s \nonumber \\
	&\phantom{xx}{}
	+ \sum_{j=1,\,j\neq i}\frac{2b_i}{p_j\min\{b_i,b_j\}}\int_{t_{m-1}}^t
	\bigg|\E\bigg(\sum_{\ell\in\C_{i,r},\,\ell\neq k}
	K_{ij}(\widetilde{X}_i^k(s)-\widetilde{X}_j^\ell(t))\cdot S(s)\bigg|\F_{m-1}\bigg)
	\bigg|\d s \nonumber \\
  &=: J_6+\cdots+J_9. \nonumber
\end{align}
By the Cauchy--Schwarz inequality, the polynomial growth condition on $\na V_i$
(Assumption (A2)), and stability estimate \eqref{3.estX} with 
$q=q'_i$, we have
\begin{align*}
  J_6 &\le 2C_V^{1/2}\int_{t_{m-1}}^t\big(\E(|S(s)|^2\big|\F_{m-1})\big)^{1/2}
	\big(\E(1+|\widetilde{X}_i^k(s)|^{2q_i}\big|\F_{m-1})\big)^{1/2}\d s \\
	&\le 2C_V^{1/2}\big(1 + C(q)(1+\theta^q) + |\widetilde{X}_i^k(t_{m-1})|^q\big)^{1/2}
	\int_{t_{m-1}}^t\big(\E(|S(s)|^2\big|\F_{m-1})\big)^{1/2}\d s.
\end{align*}
Next, using the boundedness of $K_{ii}$, Lemma \ref{lem.cond0}, 
and H\"{o}lder's inequality,
\begin{align*}
  J_8 &\le C\|K_{ii}\|_\infty\int_{t_{m-1}}^t
	\big(\E(|S(s)|^2\big|\F_{m-1})\big)^{1/2}\d s, \\
  J_9 &\le C\sum_{j=1,\,j\neq i}^n\frac{b_i}{\min\{b_i,b_j\}}\|K_{ii}\|_\infty
	\int_{t_{m-1}}^t\big(\E(|S(s)|^2\big|\F_{m-1})\big)^{1/2}\d s \\
	&\le C\theta\int_{t_{m-1}}^t\big(\E(|S(s)|^2\big|\F_{m-1})\big)^{1/2}\d s.
\end{align*}
Hence, we infer from \eqref{3.aux1} that
\begin{equation}\label{3.aux2}
  \E(|S(t)|^2|\F_{m-1}) \le C_5\int_{t_{m-1}}^t
	\big(\E(|S(s)|^2|\F_{m-1})\big)^{1/2}\d s + d\sigma_i^2(t-t_{m-1}),
\end{equation}
where $C_5:=2C_V^{1/2}(1 + C(q)(1+\theta^q) + |\widetilde{X}_i^k(t_{m-1})|^q)^{1/2} 
+ C\theta$. We deduce from estimate \eqref{3.estX} that the integrand on the right-hand 
side can be estimated according to
\begin{align*}
  \E(|S(s)|^2|\F_{m-1}) 
	&\le \frac12\E\big(|\widetilde{X}_i^k(s)|^2\big|\F_{m-1}\big)
	+ \frac12|\widetilde{X}_i^k(t_{m-1})|^2 \\
	&\le \frac{C(2)}{2}(1+\theta^2) + \frac12|\widetilde{X}_i^k(t_{m-1})|^2.
\end{align*}
Inserting this estimate into \eqref{3.aux2}, we conclude that
$$
  \E(|S(t)|^2|\F_{m-1}) \le C_6\tau(1+\theta^{q/2+1})
	\big(1 + |\widetilde{X}_i^k(t_{m-1})|^{q/2+1}\big),
$$
where $C_6>0$ does not depend on $b_i$, $p_i$, or $\xi_m$.
\end{proof}

We define the error process $Z_i^k(t):=\widetilde{X}_i^k(t)-X_i^k(t)$ and
prove some estimates for $Z_i^k(t)$, generalizing \cite[Lemma 3.4]{JLL20}.

\begin{lemma}[Control of the error process]\label{lem.Z}
Let $i\in\{1,\ldots,n\}$, $k\in\{1,\ldots,N_i\}$, and $X_{0,i}^k\in L^{q'_i}(\Omega)$,
where $q'_i=2\max\{1,q_i\}$ and $q_i$ is introduced in Assumption (A2). Then there
exists a constant $C>0$, which is independent of $(b_i,p_i)_{i=1,\ldots,n}$, 
and $m$ such that for all $t\in(t_{m-1},t_m]$,
\begin{align}
  & \|Z_i^k(t)-Z_i^k(t_{m-1})\| \le C\tau(1+\theta^{q_i'/2}), \quad
	|Z_i^k(t)|\le C\tau\theta + |Z_i^k(t_{m-1})|, \label{3.Z1} \\
	& \big|\E\big((Z_i^k(t)-Z_i^k(t_{m-1}))\chi_i^k(\widetilde{X}(t_{m-1}))\big)\big| 
	\label{3.Z2} \\
	&\phantom{xx}{}\le C\tau\bigg((1+\theta^{3q'_i/2})\tau 
	+ (1+\theta^{q'_i})\|Z_i^k(t)\|
	+ \sum_{j=1}^n\|Z_j^1(t)\|\bigg) 
	+ 8\tau\max_{j=1,\ldots,n}\|K_{ij}\|_\infty^2\Gamma_i, \nonumber
\end{align}
where $\Gamma_i$ and $\chi_i^k$ are defined in \eqref{1.Gamma} and \eqref{2.chi},
respectively.
\end{lemma}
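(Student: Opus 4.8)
The plan is to work entirely from the drift representation of the error process. Since the Brownian motions cancel under the synchronous coupling (A4), $Z_i^k$ has no martingale part, and on $(t_{m-1},t]$ one has $Z_i^k(t)-Z_i^k(t_{m-1})=\int_{t_{m-1}}^t b_i^k(s)\,\d s$ with drift $b_i^k = -(\na V_i(\widetilde{X}_i^k)-\na V_i(X_i^k)) + \sum_j\alpha_{ij}\sum_\ell\Delta K_{ij}^\ell + \chi_i^k(\widetilde{X})$, the equation recalled in the introduction. For the two bounds in \eqref{3.Z1} I would estimate $b_i^k$ directly. Taking $L^2(\Omega)$ norms, the potential part is controlled by the triangle inequality together with the polynomial growth (A2) and the moment bounds of Lemma \ref{lem.stab1}, costing a factor $1+\theta^{q_i}\le 1+\theta^{q_i'/2}$; the kernel part is $O(1)$ since $K_{ij}$ is bounded and $\alpha_{ij}N_j$ is bounded; and the factor $b_i/\min\{b_i,b_j\}\le\theta$ in $\beta_{ij}$ gives the pathwise bound $|\chi_i^k|\le C\theta\le C\theta^{q_i'/2}$. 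Integrating over an interval of length $\tau$ yields the first inequality. For the second (pathwise) inequality I would instead differentiate $|Z_i^k|^2$: strong convexity (A3) turns the potential contribution into $-r_i|Z_i^k|^2$, which together with the Lipschitz bound (A1) on the kernel differences and the smallness of the Lipschitz constants in (A3) makes the interaction part nonpositive (a contraction), so that only the source $\chi_i^k$, bounded pathwise by $C\theta$, drives the growth; a Gronwall argument on the single step then gives $|Z_i^k(t)|\le|Z_i^k(t_{m-1})|+C\tau\theta$.

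The estimate \eqref{3.Z2} is the heart of the lemma and rests on the consistency of the scheme. Since the partition $\xi_{m-1}$ is drawn afresh at $t_{m-1}$ and is independent of $\G_{m-1}$, while $\widetilde{X}(t_{m-1})$ (hence $Z_i^k(t_{m-1})$ and all positions at $t_{m-1}$) is $\G_{m-1}$-measurable, Proposition \ref{prop.cons} gives $\E(\chi_i^k(\widetilde{X}(t_{m-1}))\mid\G_{m-1})=0$. Consequently any $\G_{m-1}$-measurable factor pairs to zero against $\chi_i^k(\widetilde{X}(t_{m-1}))$; in particular the contribution of $Z_i^k(t_{m-1})$ drops out, which is exactly why only the increment $Z_i^k(t)-Z_i^k(t_{m-1})$ occurs. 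I would then insert the drift representation and treat the three groups of terms in $\int_{t_{m-1}}^t\E(b_i^k(s)\cdot\chi_i^k(\widetilde{X}(t_{m-1})))\,\d s$ separately.

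The decisive term is the self-pairing $\int_{t_{m-1}}^t\E(\chi_i^k(\widetilde{X}(s))\cdot\chi_i^k(\widetilde{X}(t_{m-1})))\,\d s$. Writing $\chi_i^k(\widetilde{X}(s))=\chi_i^k(\widetilde{X}(t_{m-1}))+(\chi_i^k(\widetilde{X}(s))-\chi_i^k(\widetilde{X}(t_{m-1})))$, the first piece contributes $\tau\,\E|\chi_i^k(\widetilde{X}(t_{m-1}))|^2=\tau\,\E(\operatorname{Var}(\chi_i^k\mid\G_{m-1}))\le 8\tau\max_j\|K_{ij}\|_\infty^2\Gamma_i$, using that the variance formula \eqref{2.var} is bounded coefficientwise by $\Gamma_i$; this produces exactly the last term of \eqref{3.Z2}. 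For the remaining pieces — the $\chi$-increment, the potential difference, and the kernel differences, each paired with $\chi_i^k(\widetilde{X}(t_{m-1}))$ — I would use the Cauchy–Schwarz inequality together with $\|\chi_i^k(\widetilde{X}(t_{m-1}))\|\le C\sqrt{\Gamma_i}$, the growth bound (A2), the Lipschitz bound (A1), the single-step stability of Lemmas \ref{lem.stab1} and \ref{lem.stab2}, and the first estimate in \eqref{3.Z1} to replace $\|Z_i^k(s)\|$ by $\|Z_i^k(t)\|$ up to an $O(\tau)$ error. Exchangeability of the particles of each species, $\|Z_j^\ell\|=\|Z_j^1\|$, reduces the averaged kernel differences to $\sum_j\|Z_j^1(t)\|$, while collecting the powers of $\theta$ coming from $\na V_i$, from the Lipschitz constant of $\chi_i^k$, and from $\|\chi_i^k\|$ accounts for the exponents $q_i'$ and $3q_i'/2$.

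The main obstacle I anticipate is the bookkeeping of the $\theta$-powers and, above all, ensuring that the diffusive parts of the increments do not degrade the order in $\tau$. A naive Cauchy–Schwarz on the increments produces only $O(\tau^{3/2})$, which is too weak; the remedy is to take conditional expectations with respect to $\F_{m-1}$ (via the appendix Lemmas \ref{lem.cond1} and \ref{lem.cond2}), so that the mean-zero martingale contributions over $(t_{m-1},s]$ are annihilated and only the $O(\tau)$ conditional drift survives, exactly as in the proof of \eqref{3.estXXt}. Combining this with the consistency identity, which kills the $\G_{m-1}$-measurable leading parts of the differences, is what upgrades these contributions to the required $O(\tau^2)$ order and delivers the stated inequality.
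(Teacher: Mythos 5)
Your proposal follows the paper's proof in all essentials: \eqref{3.Z1} is obtained from the pure-drift representation of $Z_i^k$ (integral form for the $L^2$ bound, a differential inequality for $|Z_i^k|^2$ for the pathwise bound), and \eqref{3.Z2} by splitting the increment into potential, kernel, and $\chi$ contributions, with the self-pairing of $\chi$ producing the term $8\tau\max_j\|K_{ij}\|_\infty^2\Gamma_i$ via the consistency/variance result of Proposition \ref{prop.cons}, and the remaining terms handled by Cauchy--Schwarz, the stability lemmas, exchangeability $\|Z_j^\ell\|=\|Z_j^1\|$, and \eqref{3.Z1} to trade $\|Z_i^k(s)\|$ for $\|Z_i^k(t)\|$ up to $O(\tau)$. (The paper actually proves the bound with $\chi_i^k(X(t))$ in place of $\chi_i^k(\widetilde X(t_{m-1}))$ — that is the form used later for $J_{20}$ — but the structure is the same.)

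Two caveats. First, for the pathwise bound $|Z_i^k(t)|\le C\tau\theta+|Z_i^k(t_{m-1})|$ you cannot argue that the interaction part is a contraction for a \emph{single} particle: the Lipschitz bound $|\Delta K_{ij}^\ell|\le L_{ij}(|Z_i^k|+|Z_j^\ell|)$ brings in the errors $|Z_j^\ell|$ of the \emph{other} particles, which cannot be absorbed into $-r_i|Z_i^k|^2$ pathwise for fixed $k$; that absorption only works after summing over all particles. The paper instead bounds the entire kernel contribution crudely by $\sum_j\beta_{ij}\,p_j\,\|K_{ij}\|_\infty\le C\theta$ using only the boundedness of $K_{ij}$ from Assumption (A1), which is where the $C\tau\theta$ actually comes from; your argument should be repaired in the same way. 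Second, the obstacle you anticipate in the last paragraph is moot for this lemma: since the Brownian motions cancel, $Z_i^k$ has no martingale part, every increment entering \eqref{3.Z2} is a drift integral over an interval of length $\tau$, and the $\chi$--$\chi$ pairing is handled by a direct Cauchy--Schwarz, $\int_{t_{m-1}}^t\|\chi_i^k(\cdot(s))\|\,\|\chi_i^k(\cdot(t))\|\,\d s\le 8\tau\max_j\|K_{ij}\|_\infty^2\Gamma_i$, with no conditioning on $\F_{m-1}$. The conditioning-to-kill-martingale-parts device you describe is indeed needed in this paper, but later, for the term $J_{21}$ in the proof of Theorem \ref{thm.main} (where increments of $\widetilde X$ itself appear), not in Lemma \ref{lem.Z}.
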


\begin{proof} Since the Brownian motions are the same for $X_i^k$ and 
$\widetilde{X}_i^k$, the process $Z_i^k(t)$ solves for $t\in(t_{m-1},t_m]$
the deterministic equation
\begin{align}\label{3.eqZ}
  \d Z_i^k(t) &= -\big(\na V_i(\widetilde{X}_i^k(t))-\na V_i(X_i^k(t))\big)\d t
	+ \sum_{j=1}^n\beta_{ij}\sum_{\ell\in\C_{j,r},\,(i,k)\neq(j,\ell)}
	K_{ij}(\widetilde{X}_i^k-\widetilde{X}_j^\ell)\d t \\
	&\phantom{xx}{}- \sum_{j=1}^n\alpha_{ij}\sum_{\ell=1,\,(i,k)\neq(j,\ell)}^{N_j}
	K_{ij}(X_i^k-X_j^\ell)\d t. \nonumber
\end{align} 

{\em Step 1: Proof of \eqref{3.Z1}.} Let $(i,k)\in\C_{i,r}$
for some $r\le b_i$. We take the expectation of the difference of the equations
\eqref{3.eqZ} solved by $Z_i^k(t)$ and $Z_i^k(t_{m-1})$, respectively, and
distinguish the cases $j=i$ and $j\neq i$, leading to
\begin{align}
  & \|Z_i^k(t)-Z_i^k(t_{m-1})\| \le J_{10}+\cdots+J_{14}, \quad\mbox{where} 
	\label{3.aux3} \\
	& J_{10} = \bigg\|\int_{t_{m-1}}^t
	\big(\na V_i(\widetilde{X}_i^k(s))-\na V_i(X_i^k(s))\big)\d s\bigg\|, \nonumber \\
	& J_{11} = \frac{1}{p_i-1}\bigg\|\int_{t_{m-1}}^t
	\sum_{\ell\in\C_{j,r},\,(i,k)\neq(j,\ell)}
  K_{ii}(\widetilde{X}_i^k(s)-\widetilde{X}_i^\ell(s))\d s\bigg\|,  \nonumber \\
	& J_{12} = \sum_{j=1,\,j\neq i}^n\frac{b_i}{p_j\min\{b_i,b_j\}}\bigg\|\int_{t_{m-1}}^t
	\sum_{\ell\in\C_{j,r}}K_{ij}(\widetilde{X}_i^k(s)-\widetilde{X}_j^\ell(s))
	\d s\bigg\|,  \nonumber \\
	& J_{13} = \frac{1}{N_i-1}\sum_{\ell=1,\,\ell\neq k}^{N_j}\bigg\|\int_{t_{m-1}}^t
	K_{ii}(X_i^k(s)-X_i^\ell(s))\d s\bigg\|,  \nonumber \\
	& J_{14} = \sum_{j=1,\,j\neq i}^n\frac{1}{N_j}\sum_{\ell=1}^{N_j}\bigg\|
	\int_{t_{m-1}}^t K_{ij}(X_i^k(s)-X_j^\ell(s))\d s\bigg\|.  \nonumber 
\end{align}
For the first term, we use the Cauchy--Schwarz inequality, the growth condition
of $\na V_i$, and stability estimate \eqref{3.estX} with $q=q'_i$: 
\begin{align*}
  J_{10} &\le \sqrt{\tau}\bigg(\E\int_{t_{m-1}}^t
	\big|\na V_i(\widetilde{X}_i^k(s))-\na V_i(X_i^k(s))\big|^2\d s\bigg)^{1/2} \\
	&\le C_V\sqrt{\tau}\bigg(\E\int_{t_{m-1}}^t \big(1 + \E|\widetilde{X}_i^k(s)|^{q}
	+ \E|X_i^k(s)|^{q}\big)\d s\bigg)^{1/2} 
	\le C\tau(1+\theta^{q/2}).
\end{align*}
For the remaining terms, we exploit the boundedness of $K_{ij}$, yielding
$$
  J_{11}+\cdots+J_{14} \le C\tau\sum_{j=1}^n\bigg(1+\frac{b_i}{\min\{b_i,b_j\}}\bigg)
	\|K_{ij}\|_\infty \le C\tau(1+\theta).
$$
Thus, we deduce from \eqref{3.aux3} that
$$
  \|Z_i^k(t)-Z_i^k(t_{m-1})\| \le C\tau(1+\theta+\theta^{q/2}),
$$
which proves the first inequality in \eqref{3.Z1}.

We estimate similarly as in the proof
of Lemma \ref{lem.stab1}, using the strong convexity of $V_i$ and the boundedness
of $K_{ij}$:
$$
  \frac{\d}{\d t}|Z_i^k(t)|^2 \le -r_i|Z_i^k(t)|^2 
	+ \sum_{j=1}^n\frac{b_i\|K_{ij}\|_\infty}{\min\{b_i,b_j\}}|Z_i^k(t)| 
	\le C\theta|Z_i^k(t)|.
$$
This implies after integration with respect to time that $|Z_i^k(t)|\le 
C\theta\tau+|Z_i^k(t_{m-1})|$, showing the second inequality in \eqref{3.Z1}.

{\em Step 2: Proof of \eqref{3.Z2}.} Set
$\Delta K_{ij}^\ell:=K_{ij}(\widetilde{X}_i^k-\widetilde{X}_j^\ell)
- K_{ij}(X_i^k-X_j^\ell)$. Using the formulation \eqref{1.rbm2} for 
$\widetilde{X}_i^k$, we find that
\begin{align}\label{3.J15J18}
  & \big|\E\big((Z_i^k(t)-Z_i^k(t_{m-1}))\chi_i^k(X(t))\big)\big|
	\le J_{15}+\cdots+J_{18}, \quad\mbox{where} \\
	& J_{15} = \E\bigg(\int_{t_{m-1}}^{t}\big|\na V_{i}(\widetilde{X}_i^k(s)) 
	- \na V_{i}(X_i^k(s))\big|\d s |\chi_{i}^{k}(X(t))|\bigg), \nonumber \\
	& J_{16} = \E\bigg(\frac{1}{p_i-1}\int_{t_{m-1}}^t\sum_{\ell\in\C_{i,r},\,\ell\neq k}
	|\Delta K_{ii}^\ell(s)|\d s|\chi_i^k(X(t))|\bigg), \nonumber \\
	& J_{17} = \E\bigg(\sum_{j=1,\,j\neq i}^n\frac{b_i}{p_j\min\{b_i,b_j\}}
	\int_{t_{m-1}}^t\sum_{\ell\in\C_{j,r}}|\Delta K_{ij}^\ell(s)|\d s
	|\chi_i^k(X(t))|\bigg), \nonumber \\
	& J_{18} = \E\bigg|\int_{t_{m-1}}^t\chi_i^k(X(s))\d s\cdot\chi_i^k(X(t))\bigg|.
	\nonumber 
\end{align}
For the term $J_{15}$, we use the mean-value theorem and the growth condition
for $D^2V_i$ (Assumption (A2)):
\begin{align*}
  \big|\na &V_i(\widetilde{X}_i^k-\na V_i(X_i^k)\big|
	\le |\widetilde{X}_i^k-X_i^k|\int_0^1|D^2 V_i
	(\widetilde{X}_i^k-\eta(\widetilde{X}_i^k-X_i^k))|\d\eta \\
	&\le C_V|Z_i^k|\int_0^1\big(1 + |\widetilde{X}_i^k - \eta(\widetilde{X}_i^k-X_i^k)
	|^{q_i}\big)d\eta 
	\le C|Z_i^k|(1+ |\widetilde{X}_i^k|^{q_i} + |X_i^k|^{q_i}).
\end{align*}
Since $|\chi_i^k|\le 2\sum_{j=1}^n\|K_{ij}\|_\infty$, the Cauchy--Schwarz inequality
and stability estimate \eqref{3.estX} lead to
\begin{align*}
  J_{15} &\le C\sum_{j=1}^n\|K_{ij}\|_\infty\int_{t_{m-1}}^t\E\big(|Z_i^k(s)|
	(1+ |\widetilde{X}_i^k(s)|^{q_i} + |X_i^k(s)|^{q_i})\big)\d s \\
	&\le C\sum_{j=1}^n\|K_{ij}\|_\infty\int_{t_{m-1}}^t\|Z_i^k(s)\|
	\|1+ |\widetilde{X}_i^k(s)|^{q_i} + |X_i^k(s)|^{q_i}\|\d s \\
	&\le C(q)\tau(1+\theta^{q_i})\big(\tau(1+\theta^{q_i/2}) + \|Z_i^k(t)\|\big).
\end{align*}
The last inequality follows from
\begin{align}\label{3.triangle}
  \|Z_i^k(s)\| &\le \|Z_i^k(s)-Z_i^k(t_{m-1})\| 
	+ \|Z_i^k(t_{m-1})-Z_i^k(t)\| + \|Z_i^k(t)\| \\
	&\le 2C\tau(1+\theta^{q_i/2}) + \|Z_i^k(t)\|, \nonumber 
\end{align}
which in turn is a consequence of estimate \eqref{3.Z1}. We conclude that
$$
  J_{15} \le C\tau^2(1+\theta^{3q_i/2}) + C\tau(1+\theta^{q_i})\|Z_i^k(t)\|.
$$

We use the Lipschitz continuity of $K_{ij}$ (Assumption (A1)) to obtain
\begin{align*}
  J_{16} &\le \frac{2}{p_i-1}\sum_{j=1}^n\|K_{ij}\|_\infty\int_{t_{m-1}}^t\E
	\sum_{\ell\in\C_{i,r},\,\ell\neq k}|\Delta K_{ij}^\ell(s)|\d s \\
	&\le \frac{CL_{ii}}{p_i-1}\int_{t_{m-1}}^t\E\sum_{\ell\in\C_{i,r},\,\ell\neq k}
	\big(|\widetilde{X}_i^k(s)-X_i^k(s)| + |X_i^\ell(s)-\widetilde{X}_i^\ell(s)|\big)
	\d s \\
	&\le \frac{CL_{ii}}{p_i-1}\int_{t_{m-1}}^t\bigg(
	\bigg\|\sum_{\ell\in\C_{i,r},\,\ell\neq k}Z_i^k(s)\bigg\|
	+ \bigg\|\sum_{\ell\in\C_{i,r},\,\ell\neq k}Z_i^\ell(s)\bigg\|\bigg)\d s.
\end{align*}
It follows from the second estimate in \eqref{3.Z1}, i.e.\ $|Z_i^k(t)|\le C\tau\theta
+ |Z_i^k(t_{m-1})|$, that
$$
  J_{16} \le \frac{CL_{ii}}{p_i-1}\int_{t_{m-1}}^t\bigg((p_i-1)C\tau\theta
	+ (p_i-1)\|Z_i^k(s)\| + \bigg\|\sum_{\ell\in\C_{i,r},\,\ell\neq k}
	Z_i^\ell(t_{m-1})\bigg\|\bigg)\d s.
$$
The variable $Z_i^\ell(s)$ is $\G_{m-1}$-measurable for all $t_{m-1}<s<t$ and
hence it is independent of $\xi_{m-1}$. Therefore, we can apply Lemma \ref{lem.S}
to the last term of the integrand to find that
\begin{equation}\label{3.aux4}
  J_{16} \le CL_{ii}\int_{t_{m-1}}^t\big(C\tau\theta
	+ \|Z_i^k(s)\| + \|Z_i^k(t_{m-1})\|\big)\d s.
\end{equation}
Here, we have taken into account the fact that $\|Z_i^k(t)\|=\|Z_i^\ell(t)\|$
for every $k$, $\ell=1,\ldots,N_i$. The last two terms of the integrand can be
estimated, by estimate \eqref{3.Z1}, according to \eqref{3.triangle} and 
$$
	\|Z_i^k(t_{m-1})\| \le \|Z_i^k(t_{m-1})-Z_i^k(t)\| + \|Z_i^k(t)\|
	\le C\tau(1+\theta^{q_i/2}) + \|Z_i^k(t)\|.
$$
Hence, we conclude from \eqref{3.aux4} that
$$
  J_{16} \le C\tau\big(\tau(1+\theta^{q/2}) + \|Z_i^k(t)\|\big),
$$
where $C>0$ does not depend on $b_i$, $p_i$, or $m$ and recalling that
we have chosen $q=2\max\{1,q_i\}$. Similar arguments lead to
\begin{align*}
  J_{17} &\le 2\tau\sum_{j=1,\,j\neq i}^n\|K_{ij}\|_\infty
	\big(\tau(1+\theta^{q/2}) + \|Z_i^k(t)\| + \|Z_j^1(t)\|\big) \\
	&\le C\tau\bigg(\tau(1+\theta^{q/2}) + \|Z_i^k(t)\|
	+ \sum_{j=1,\,j\neq i}^n\|Z_j^1(t)\|\bigg).
\end{align*}

Finally, we estimate the remaining term. By the Cauchy--Schwarz inequality,
\begin{equation}\label{3.aux5}
  J_{18} = \int_{t_{m-1}}^t\E\big|\chi_i^k(X(s))\cdot\chi_i^k(X(t))\big|\d s
	\le \int_{t_{m-1}}^t\|\chi_i^k(X(s))\|\,\|\chi_i^k(X(t))\|\d s.
\end{equation}
By Lemma \ref{lem.cond0} in the appendix,
$$
  \|\chi_i^k(X(s))\|^2 = \E|\chi_i^k(X(s))|^2
	= \E\big[\E\big(|\chi_i^k(X(s))|^2\big|\sigma(X(s))\big) \big],
$$
where $\sigma(X(s))$ is the $\sigma$-algebra generated by $X(s)$.
Proposition \ref{prop.cons} states that $\E\chi_i^k(X(s))=0$ and
$\operatorname{Var}(\chi_i^k(X(s)))\le 8\max_{j=1,\ldots,n}\|K_{ij}\|_\infty^2\Gamma_i$.
Therefore,
$$
  \|\chi_i^k(X(s))\|^2 = \operatorname{Var}_{\sigma(X(s))}(\chi_i^k(X(s)))
	\le 8\max_{j=1,\ldots,n}\|K_{ij}\|_\infty^2\Gamma_i.
$$
Inserting this estimate into \eqref{3.aux5} leads to
$$
  J_{18} \le 8\tau\max_{j=1,\ldots,n}\|K_{ij}\|_\infty^2\Gamma_i.
$$
Summarizing, we obtain from \eqref{3.J15J18}
\begin{align*}
  \big|\E&\big((Z_i^k(t)-Z_i^k(t_{m-1}))\chi_i^k(X(t))\big)\big| \\
	&\le C(q)\tau\bigg(\tau(1+\theta^{3q/2}) + (1+\theta^{q})\|Z_i^k(t)\|
	+ \sum_{j=1}^n\|Z_j^1(t)\|\bigg)
	+ 8\max_{j=1,\ldots,n}\|K_{ij}\|_\infty^2\Gamma_i,
\end{align*}
which finishes the proof.
\end{proof}

%%%%%%%%%%%%%%%%%%%%%%%%%%%%%%%%%%%%%%%%%%%%%%%%%%%%%%%%%%%%%%%%%%%%%%%%

\section{Proof of Theorem \ref{thm.main}}\label{sec.proof}

Let $i\in\{1,\ldots,n\}$ and $k\in\{1,\ldots,N_i\}$ be such that $(i,k)\in\C_{i,r}$
for some $r\le b_i$. As in the last section, we set
$\Delta K_{ij}^\ell:=K_{ij}(\widetilde{X}_i^k-\widetilde{X}_j^\ell)
- K_{ij}(X_i^k-X_j^\ell)$. The process $Z_i^k$ satisfies
\begin{align*}
  \d Z_i^k(t) &= -(\na V_i(\widetilde{X}_i^k(t))-\na V_i(X_i^k(t)))\d t
	+ \frac{1}{N_i-1}\sum_{\ell=1,\,\ell\neq k}^{N_i}\Delta K_{ii}^\ell(t)\d t \\
	&\phantom{xx}{}
	+ \sum_{j=1,\,j\neq i}^n \frac{1}{N_j}\sum_{\ell=1}^{N_j}\Delta K_{ij}^\ell(t)\d t
	+ \chi_i^k(\widetilde{X}(t))\d t.
\end{align*}
In particular, $Z_i^k$ is pathwise a.e.\ differentiable in time. 

{\em Step 1: Differential inequality for $|Z_i^k|^2$.} Together
with the strong convexity of $V_i$ (Assumption (A2)) and the Lipschitz continuity
of $K_{ij}$ (Assumption (A1)), we find that
\begin{align*}
  \frac12\frac{\d}{\d t}|Z_i^k|^2 &= -(\na V_i(\widetilde{X}_i^k)-\na V_i(X_i^k))
	\cdot Z_i^k + \frac{1}{N_i-1}\sum_{\ell=1,\,\ell\neq k}^{N_i}
	\Delta K_{ii}^\ell\cdot Z_i^k \\
	&\phantom{xx}{}+ \sum_{j=1,\,j\neq i}^n\frac{1}{N_j}\sum_{\ell=1}^{N_j}
	\Delta K_{ij}^\ell\cdot Z_i^k + \chi_i^k(\widetilde{X})\cdot Z_i^k \\
	&\le -r_i|Z_i^k|^2 + \frac{L_{ii}}{N_i-1}\sum_{\ell=1,\,\ell\neq k}^{N_i}
	(|Z_i^k|+|Z_i^\ell|)|Z_i^k| \\
	&\phantom{xx}{}+ \sum_{j=1,\,j\neq i}^n\frac{L_{ij}}{N_j}\sum_{\ell=1}^{N_j}
	(|Z_i^k|+|Z_j^\ell|)|Z_i^k| + \chi_i^k(\widetilde{X})\cdot Z_i^k.
\end{align*}
By taking the expectation and using Young's inequality, it follows after a standard
computation that
$$
  \frac12\frac{\d}{\d t}\E|Z_i^k|^2 \le -r_i\E|Z_i^k|^2
	+ \frac32\sum_{j=1}^n L_{ij}\E|Z_i^k|^2 + \frac12\sum_{j=1}^n L_{ij}|Z_j^k|^2
	+ \E(\chi_i^k(\widetilde{X})\cdot Z_i^k).
$$
Without loss of generality, we may take $k=1$ (since the distributions coincide).
A summation over $i=1,\ldots,n$ and exchanging the summation indices in the
third term of the right-hand side leads to
\begin{align}\label{3.dZi1dt}
  \frac12\frac{\d}{\d t}\sum_{i=1}^n\E|Z_i^1|^2 &\le -\sum_{i=1}^n r_i\E|Z_i^1|^2
	+ \frac32\sum_{i,j=1}^n L_{ij}\E|Z_i^1|^2 
	+ \frac12\sum_{i,j=1}^n L_{ji}\E|Z_i^1|^2 \\
	&\phantom{xx}{}+ \sum_{i=1}^n\E(\chi_i^1(\widetilde{X})\cdot Z_i^1) \nonumber \\
	&\le -\min_{i=1,\ldots,n}\bigg(r_i - 2\sum_{j=1}^n\max\{L_{ij},L_{ji}\}
	\bigg)\sum_{i=1}^n\|Z_i^1\|^2 + \E(\chi_i^1(\widetilde{X})\cdot Z_i^1). \nonumber
\end{align}

It remains to estimate the last term $\E(\chi(\widetilde{X})\cdot Z_i^1)$.
To this end, we write
\begin{align}
  & \E(\chi(\widetilde{X}(t))\cdot Z_i^1(t)) 
	= J_{19}+\cdots+J_{22}, \quad\mbox{where} \label{3.J19J22} \\
	& J_{19} = \E\big(Z_i^1(t_{m-1})\cdot\chi_i^1(\widetilde{X}(t_{m-1}))\big), 
	\nonumber \\
	& J_{20} = \E\big((Z_i^1(t)-Z_i^1(t_{m-1}))\cdot\chi_i^1(X(t))\big), \nonumber \\
	& J_{21} = \E\big(Z_i^1(t_{m-1})\cdot
	(\chi_i^1(\widetilde{X}(t))-\chi_i^1(\widetilde{X}(t_{m-1})))\big), \nonumber \\
	& J_{22} = \E\big((Z_i^1(t)-Z_i^1(t_{m-1}))\cdot(\chi_i^1(\widetilde{X}(t))
	- \chi_i^1(X(t)))\big). \nonumber 
\end{align}

{\em Step 2: Estimate of $J_{19}$ and $J_{20}$.} Since $\xi_{m,i}$ is independent 
of $\G_{m-1}$ and $Z_i^1(t_{m-1})$ is $\G_{m-1}$-measurable, we obtain from 
Lemma \ref{lem.cond0} in the appendix that
\begin{align*}
  \E\big(Z_i^1(t_{m-1})\big|\G_{m-1}\big) &= Z_i^1(t_{m-1}), \\
	\E\big(Z_i^1(t_{m-1})\cdot\chi(\widetilde{X}(t_{m-1}))\big|\G_{m-1}\big)
	&= Z_i^1(t_{m-1})\cdot\E\big(\chi(\widetilde{X}(t_{m-1}))\big|\G_{m-1}\big).
\end{align*}
This shows that, using Proposition \ref{prop.cons},
$$
  J_{19} = \E\big[\E\big(Z_i^1(t_{m-1})\cdot\chi(\widetilde{X}(t_{m-1}))
	\big|\G_{m-1}\big)\big]
	= \E\big[Z_i^1(t_{m-1})\cdot\E\big(\chi(\widetilde{X}(t_{m-1}))
	\big|\G_{m-1}\big)\big] = 0.
$$
The term $J_{20}$ can be directly estimated from \eqref{3.Z2}:
$$
  J_{20} \le C\tau\bigg((1+\theta^{3q'_i/2})\tau + (1+\theta^{q'_i})\|Z_i^1(t)\|
	+ \sum_{j=1}^n\|Z_j^1(t)\|\bigg) 
	+ 8\tau\max_{j=1,\ldots,n}\|K_{ij}\|_\infty^2\Gamma_i.
$$

{\em Step 3: Estimate of $J_{21}$.} We observe that $Z_i^1(t_{m-1})$ is
$\F_{m-1}$-measurable. By the law of total expectation (Lemma \ref{lem.cond0})
and the Cauchy--Schwarz inequality,
\begin{align}
  J_{21} &= \E\big[Z_i^1(t_{m-1})\E\big(\chi_i^1(\widetilde{X}_i^1(t))
	- \chi_i^1(\widetilde{X}(t_{m-1}))\big|\F_{m-1}\big)\big] \label{3.J21} \\
	&\le \|Z_i^1(t_{m-1})\|\,\big\|\E\big(\chi_i^1(\widetilde{X}_i^1(t))
	- \chi_i^1(\widetilde{X}(t_{m-1}))\big|\F_{m-1}\big)\big\|. \nonumber
\end{align}
We deduce from \eqref{3.triangle} that the first factor on the right-hand side 
is bounded from above by
\begin{equation}\label{3.Zi1} 
  \|Z_i^1(t_{m-1})\| \le C\tau(1+\theta^{q_i'/2}) + \|Z_i^k(t)\|.
\end{equation}
For the second factor, we introduce the notation
\begin{align*}
  \Delta \widetilde{K}_{ij}^\ell &:= 
	K_{ij}(\widetilde{X}_i^1(t)-\widetilde{X}_j^\ell(t))
	- K_{ij}(\widetilde{X}_i^1(t_{m-1})-\widetilde{X}_j^\ell(t_{m-1})), \\
	\Delta\widetilde{X}_{ij}^\ell &:= (\widetilde{X}_i^1(t)-\widetilde{X}_j^\ell(t))
	- (\widetilde{X}_i^1(t_{m-1})-\widetilde{X}_j^\ell(t_{m-1})).
\end{align*}
Since $\xi_m$ is $\F_{m-1}$-measurable, we can write the second factor on
the right-hand side of \eqref{3.J21} as follows:
\begin{align}
  \E&\big(\chi_i^1(\widetilde{X}(t))
	- \chi_i^1(\widetilde{X}(t_{m-1}))\big|\F_{m-1}\big) \label{3.aux6} \\
	&= \frac{1}{p_i-1}\sum_{\ell\in\C_{i,r},\,\ell\neq 1}\E(\Delta\widetilde{K}_{ii}^\ell
	|\F_{m-1}) - \frac{1}{N_i-1}\sum_{\ell=1,\,\ell\neq i}^{N_i}
	\E(\Delta\widetilde{K}_{ii}^\ell|\F_{m-1}) \nonumber \\
	&\phantom{xx}{}+ \sum_{j=1,\,j\neq i}^n\frac{b_i}{p_j\min\{b_i,bj\}}
	\sum_{\ell\in\C_{j,r}}\E(\Delta\widetilde{K}_{ij}^\ell|\F_{m-1})
	- \sum_{j=1,\,j\neq i}\frac{1}{N_j}\sum_{\ell=1}^{N_j}
	\E(\Delta\widetilde{K}_{ij}^\ell|\F_{m-1}). \nonumber
\end{align}
We perform a Taylor expansion of $K_{ij}$ at $\widetilde{X}_i^1(t_{m-1})
-\widetilde{X}_j^\ell(t_{m-1})$ and use the fact that
$K_{ij}$ is Lipschitz continuous with constant $L_{ij}$, such that $DK_{ij}$
can be bounded from above by $L_{ij}$:
$$
  \big|\E(\Delta\widetilde{K}_{ii}^\ell|\F_{m-1})\big|
	\le L_{ij}\big|\E(\Delta\widetilde{X}_{ij}^\ell|\F_{m-1})\big|
	+ \frac{d}{2}\|D^2K_{ij}\|_\infty\E\big(|\Delta\widetilde{X}_{ij}^\ell|^2
	\big|\F_{m-1}\big).
$$
Inserting
$$
  \Delta\widetilde{X}_{ij}^\ell 
	= \big(\widetilde{X}_i^1(t)-\widetilde{X}_i^1(t_{m-1})\big)
	+ \big(\widetilde{X}_j^\ell(t)-\widetilde{X}_j^\ell(t_{m-1})\big)
$$
into the previous estimate and taking into account the stability estimates
of Lemmas \ref{lem.stab1} and \ref{lem.stab2}, we infer that
\begin{align*}
  \big\|\E(\Delta\widetilde{K}_{ii}^\ell|\F_{m-1})\big\|
	&\le  C\tau L_{ij}\big(1+\theta^{\widetilde{q}_i}+\theta^{\widetilde{q}_j}\big) \\
	&\phantom{xx}{}+ C\tau\|D^2K_{ij}\|_\infty(1+\theta^{q'_i/2+1})
	\big(1+\| |\widetilde{X}_i^1(t_{m-1})|^{q'_i/2+1}\|\big) \\
	&\le C\tau\big(1+\theta^{q_i'+2}),
\end{align*}
where the constant $C>0$ does not depend
on $b_i$, $p_i$, or $m$. We use this estimate in \eqref{3.aux6} and observe that
$b_i/\min\{b_i,b_j\}\le\theta$, yielding
\begin{equation}\label{3.chi1}
  \big\|\E\big(\chi_i^1(\widetilde{X}_i^1(t))
	- \chi_i^1(\widetilde{X}(t_{m-1}))\big|\F_{m-1}\big)\big\|
	\le  C\tau\big(1+\theta^{q_i'+3}).
\end{equation}
Finally, we combine estimates \eqref{3.Zi1} and \eqref{3.chi1} to conclude from
\eqref{3.J21} that
$$
  J_{21} \le C\tau(1+\theta^{q'_i+3})\|Z_i^1(t)\| + C\tau^2(1+\theta^{3q'_i/2+3}).
$$

{\em Step 4: Estimate of $J_{22}$.} Set $\Delta K_{ij}^\ell
:=K_{ij}(\widetilde{X}_i^1(t)-\widetilde{X}_j^\ell(t))
- K_{ij}(X_i^1(t)-X_j^\ell(t))$. We use the Cauchy--Schwarz inequality and
\eqref{3.Z1} to obtain
\begin{align}\label{3.J22}
  J_{22} &\le \|Z_i^1(t)-Z_i^1(t_{m-1})\|\,\|\chi_i^1(\widetilde{X}(t))
	- \chi_i^1(X(t))\| \\
	&\le C\tau(1+\theta^{q'_i/2})\|\chi_i^1(\widetilde{X}(t)) - \chi_i^1(X(t))\| 
	\nonumber \\
	&\le C\tau(1+\theta^{q'_i/2})\bigg(\frac{1}{p_i-1}
	\bigg\|\sum_{\ell\in\C_{i,r},\,\ell\neq k}\Delta K_{ii}^\ell\bigg\|
	+ \frac{1}{N_i-1}\sum_{\ell=1,\,\ell\neq k}^{N_i}\|\Delta K_{ii}^\ell\| \nonumber \\
	&\phantom{xx}{}+ \sum_{j=1,\,j\neq i}^n\frac{b_i}{p_j\min\{b_i,b_j\}}\bigg\|
	\sum_{\ell\in\C_{j,r}}\Delta K_{ij}^\ell\bigg\|
	+ \sum_{j=1,\,j\neq i}^n\frac{1}{N_j}\sum_{\ell=1}^{N_j}\|\Delta K_{ij}^\ell\|
	\bigg). \nonumber 
\end{align}
The difference $\Delta K_{ij}^\ell$ can be estimated according to (see the
second inequality in \eqref{3.Z1})
$$
  |\Delta K_{ij}^\ell|\le L_{ij}\big(|Z_i^1(t)|+|Z_j^\ell(t)|\big)
	\le C\big(\tau\theta + |Z_i^1(t_{m-1})| + |Z_j^\ell(t_{m-1})|\big).
$$
Then, with the help of the auxiliary Lemma \ref{lem.S}, 
\begin{align*}
  \frac{1}{p_i-1}\bigg\|\sum_{\ell\in\C_{i,r},\,\ell\neq k}\Delta K_{ii}^\ell\bigg\|
	&\le \frac{C}{p_i-1}
	\bigg\|\sum_{\ell\in\C_{i,r},\,\ell\neq k}\big(\tau\theta + |Z_i^1(t_{m-1})| 
	+ |Z_i^\ell(t_{m-1})|\big)\bigg\| \\
	&\le C\tau\theta + C\|Z_i^1(t_{m-1})\|
	\le C\tau(1+\theta^{q'_i/2}) + C\|Z_i^1(t)\|, \\
	\frac{1}{p_j}\bigg\|\sum_{\ell\in\C_{j,r},\,\ell\neq k}\Delta K_{ij}^\ell\bigg\|
	&\le C\tau(1+\theta^{\gamma/2}) + C\|Z_i^1(t)\| + C\|Z_j^1(t)\|,
\end{align*}
where $\gamma=\max_{j=1,\ldots,n}q'_j$. Therefore, because of
$b_i/\min\{b_i,b_j\}\le\theta$, \eqref{3.J22} becomes
$$
  J_{22} \le C\tau(1+\theta^{q'_i/2})(1+\theta)\bigg(\tau(1+\theta^{\gamma/2})
	+ \sum_{j=1}^n\|Z_j^1(t)\|\bigg).
$$
We deduce from \eqref{3.J19J22} and the previous estimates for 
$J_{19},\ldots,J_{22}$ that
\begin{align}\label{3.chiZ}
  \sum_{i=1}^n\E\big(\chi_i^1(\widetilde{X}(t))\cdot Z_i^1\big)
	&\le C\tau^2(1+\theta^{3\gamma/2+3}) + C\tau(1+\theta^{\gamma+3})\sum_{i=1}^n
	\|Z_i^1(t)\| + C\tau\sum_{i=1}^n\Gamma_i \\
	&\le C_7(\theta)\tau^2 + C_8(\theta)\tau\bigg(\sum_{i=1}^n\|Z_i^1(t)\|^2\bigg)^{1/2}
	+ C_9\tau\sum_{i=1}^n\Gamma_i. \nonumber
\end{align}

{\em Step 5: End of the proof.} Let 
$$
  u(t)=\sum_{i=1}^n\|Z_i^1(t)\|^2, \quad
	r=\min_{i=1,\ldots,n}\bigg(r_i-2\sum_{j=1}^n\max\{L_{ij},L_{ji}\}\bigg) > 0.
$$
We infer from \eqref{3.dZi1dt} and \eqref{3.chiZ} that
$$
  \frac{\d u}{\d t} \le -ru + C_7\tau^2 + C_8\tau u^{1/2} 
	+ C_9\tau\sum_{i=1}^n\Gamma_i.
$$
The positive solution $z_+$ of the quadratic equation
$-rz^2 + C_7\tau^2 + C_8\tau z + C_9\tau\sum_{i=1}^n\Gamma_i = 0$
gives us an upper bound for $u(t)^{1/2}$, since $\d u/\d t\le 0$ otherwise.
Consequently,
\begin{align*}
  u(t)^{1/2} &\le z_+ = \frac{C_9}{2r}\tau + \frac{\sqrt{\tau}}{2r}
	\bigg(C_8^2\tau + 4C_7 r\tau + 4C_9 r\sum_{i=1}^n\Gamma_i\bigg)^{1/2} \\
	&\le C\tau(1+\theta^{3\gamma/2+3}) 
	+ C\sqrt{\tau}\bigg(\sum_{i=1}^n\Gamma_i\bigg)^{1/2}.
\end{align*}
This ends the proof of Theorem \ref{thm.main}.

%%%%%%%%%%%%%%%%%%%%%%%%%%%%%%%%%%%%%%%%%%%%%%%%%%%%%%%%%%%%%%%%%%%%%%%%%%%%%

\section{Particle systems with multiplicative noise}
\label{sec.multi}

The technique of the proof of Theorem \ref{thm.main} can be applied to
particle systems with multiplicative noise,
$$
  \d X_i^k = -\na V_i(X_i^k)\d t + \sum_{j=1}^n\alpha_{ij}\sum_{\substack{\ell=1 \\ 
	(i,k)\neq (j,\ell)}}^{N_j} K_{ij}(X_i^k-X_j^\ell)\d t 
	+ \sigma_i(X_i^k)\d B_i^k(t),
$$
with initial conditions \eqref{1.ic}, and $\alpha_{ij}=1/(N_j-\delta_{ij})$,
$i,j=1,\ldots,n$, $k=1,\ldots,N_i$. The random-batch process 
$\widetilde{X}_i^k$ is defined as in \eqref{1.rbm} but with 
$\sigma_i(\widetilde{X}_i^k)$ instead of $\sigma_i$.
In addition to Assumptions (A1)--(A4), we suppose the following conditions:

\renewcommand{\labelenumi}{(B\theenumi)}
\begin{enumerate}
\item Diffusion: $\sigma_i\in C^0(\R^d)$ is bounded and Lipschitz continuous with
Lipschitz constant $L_i>0$.
\item Strong convexity: The function $x\mapsto V_i(x)-r_{i}|x|^2/2$ is convex, where
$r_i>2\sum_{j=1}^n$ $\max\{L_{ij},L_{ji}\} + L_i^{2} d$ and 
$r_i>2L_i^2(2\max\{1,q_i\}+d-2)$, $i=1,\ldots,n$.
\end{enumerate}

\begin{theorem}\label{thm.main2}
Let Assumptions (A1)--(A2), (A4), (B1)--(B2) hold. Then there exists a constant
$C>0$, which is independent of $(b_i,p_i)_{i=1,\ldots,n}$, $m$, and $T$, such that
$$
  \sup_{0<t<T}\sum_{i=1}^n\|(X_i^k-\widetilde{X}_i^k)(t)\|
	\le C\sqrt{\tau}\bigg(1+\sum_{i=1}^n\Gamma_i\bigg)^{1/2}
	+ C\tau(1+\theta^\gamma),
$$
and $\theta$, $\gamma$, $\Gamma_i$ are defined in \eqref{1.theta}--\eqref{1.Gamma}.
\end{theorem}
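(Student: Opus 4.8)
The plan is to follow the architecture of the proof of Theorem~\ref{thm.main} (Sections~\ref{sec.cons}--\ref{sec.proof}) and to isolate the single structural novelty: since the diffusion coefficient now depends on the state, the Brownian contributions to $\widetilde{X}_i^k$ and $X_i^k$ no longer cancel in the error process $Z_i^k=\widetilde{X}_i^k-X_i^k$. First I would observe that Proposition~\ref{prop.cons} transfers verbatim, because $\chi_i^k$ encodes only the batch structure of the \emph{drift} and is independent of the noise; in particular $\E(\chi_i^k)=0$ and $\operatorname{Var}(\chi_i^k)\le 8\max_{i,j}\|K_{ij}\|_\infty^2\Gamma_i$ still hold, so the $\Gamma_i$-dependence of the final bound is generated exactly as before, and Lemma~\ref{lem.S} is likewise unaffected.

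Next I would redo the stability estimates of Lemmas~\ref{lem.stab1} and~\ref{lem.stab2}. Applying It\^o's formula to $|X_i^k|^q$ and $|\widetilde{X}_i^k|^q$ now produces the correction $\tfrac12 q(q+d-2)\E(\sigma_i(\cdot)^2|\cdot|^{q-2})$ in place of the constant-diffusion term. Using boundedness and the Lipschitz growth $\sigma_i(x)^2\le 2\sigma_i(0)^2+2L_i^2|x|^2$ from~(B1), this correction is dominated by a constant multiple of $\E|\cdot|^q$ plus a constant; the second convexity inequality in~(B2), namely $r_i>2L_i^2(2\max\{1,q_i\}+d-2)$, is precisely what keeps the leading coefficient in the resulting Gronwall inequality negative when the diffusion contribution scales with the $q$-th moment. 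Hence the moment bounds \eqref{3.estX}--\eqref{3.estXt} and the increment bound of Lemma~\ref{lem.stab2} survive with the same powers of $\theta$.

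The heart of the argument is the error process. Because the noise no longer cancels, $Z_i^k$ solves an SDE whose diffusion part is $(\sigma_i(\widetilde{X}_i^k)-\sigma_i(X_i^k))\,\d B_i^k$, so $Z_i^k$ is not pathwise differentiable and I would replace every $\d/\d t$-computation by It\^o's formula. Applied to $|Z_i^k|^2$ and after taking expectations, the martingale vanishes and the drift reproduces exactly the terms treated in Lemma~\ref{lem.Z} and in Step~1 of Section~\ref{sec.proof}, while the quadratic variation adds $d\,\E(\sigma_i(\widetilde{X}_i^k)-\sigma_i(X_i^k))^2\le dL_i^2\E|Z_i^k|^2$ by~(B1). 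This is absorbed into the dissipation using the first inequality in~(B2), $r_i>2\sum_j\max\{L_{ij},L_{ji}\}+L_i^2d$, so that the effective rate $r=\min_i(r_i-2\sum_j\max\{L_{ij},L_{ji}\}-\tfrac12 dL_i^2)$ stays strictly positive. In the incremental estimates $\|Z_i^k(t)-Z_i^k(t_{m-1})\|$ and in the analogues of $J_{20}$--$J_{22}$, the new martingale increment $\int_{t_{m-1}}^t(\sigma_i(\widetilde{X}_i^k)-\sigma_i(X_i^k))\,\d B_i^k$ is controlled by It\^o's isometry and~(B1): its $L^2$-size is of order $\sqrt{\tau}$, and paired with the remainder via Cauchy--Schwarz and Proposition~\ref{prop.cons} it yields, after Young's inequality, contributions of the form $\tau\Gamma_i$ together with an additional order-$\tau$ constant that has no counterpart in the additive case. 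This extra constant is exactly what promotes $\sum_i\Gamma_i$ to $1+\sum_i\Gamma_i$ inside the square root.

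I expect the main obstacle to be precisely this martingale bookkeeping. In the additive setting $Z_i^k$ obeys a deterministic equation and $J_{19}=0$ follows from conditioning; here one must verify that the martingale parts either vanish under $\E(\,\cdot\mid\F_{m-1})$ or are of an order compatible with the target rate $O(\sqrt{\tau})$, while keeping the quadratic-variation term subordinate to the convexity gain from~(B2). Once these estimates are assembled into a differential inequality $\d u/\d t\le -ru+C\tau u^{1/2}+C\tau(1+\sum_i\Gamma_i)$ with $u=\sum_i\|Z_i^1\|^2$, I would conclude as in Step~5 of Section~\ref{sec.proof}: bounding $u^{1/2}$ by the positive root of the associated quadratic equation gives $u(t)^{1/2}\le C\tau(1+\theta^{3\gamma/2+3})+C\sqrt{\tau}(1+\sum_i\Gamma_i)^{1/2}$, which after summation over $i$ is the claimed estimate.
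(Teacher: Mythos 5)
Your proposal is correct and follows essentially the same route as the paper's (sketched) proof: Proposition \ref{prop.cons} and Lemma \ref{lem.S} unchanged, stability redone via the Lipschitz bound $\sigma_i(x)^2\le 2\sigma_i(0)^2+2L_i^2|x|^2$ with the second inequality in (B2), It\^o's formula replacing the pathwise derivative of $|Z_i^k|^2$ with the quadratic-variation term absorbed by the first inequality in (B2), and the order-$\sqrt{\tau}$ martingale increment (via It\^o isometry) producing the extra constant that turns $\sum_i\Gamma_i$ into $1+\sum_i\Gamma_i$. Your bookkeeping differs only cosmetically from the paper's (you absorb the $\sqrt{\tau}\|Z_i^k\|$ terms by Young's inequality into the dissipation plus an $O(\tau)$ constant, whereas the paper keeps a $\sqrt{\tau}\,u^{1/2}$ coefficient in the differential inequality); both yield the stated bound.
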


\begin{proof}[Sketch of the proof]
The proof is similar to that one for Theorem \ref{thm.main} except for some
additional estimates for the multiplicative noise term. In particular,
Proposition \ref{prop.cons} keeps unchanged since it is concerned with the
shuffling process only. For the stability (Lemma \ref{lem.stab1}), we need
the condition $2\le q\le q'_i=2\max\{1,q_i\}$. 
The proof is essentially the same, except for the estimate
of the term $\frac12 q(q+d-2)\E(\sigma_i^2|X_i^k|^{q-2})$. Here, we use the
Lipschitz continuity of $\sigma_i$ and the stricter condition on $r_i$
in Assumption (B2).
In the estimate for $\widetilde{X}_i^k(t)-\widetilde{X}_i^k(t_{m-1})$
(Lemma \ref{lem.stab2}), the diffusion
$\sigma_i$ is controlled by the Lipschitz continuity,
$\sigma_i(\widetilde{X}_i^k)^2\le 2L_i^{2}|\widetilde{X}_i^k|^2 + 2\sigma_i(0)^2$,
and Lemma \ref{lem.stab1}.
Finally, for the control of the error process (Lemma \ref{lem.Z}), estimates 
\eqref{3.Z1}--\eqref{3.Z2} need to be changed to
\begin{align}
  & \|Z_i^k(t)-Z_i^k(t_{m-1})\| \le C\tau(1+\theta^{q'_i/2}) + C\sqrt{\theta}, 
	\label{5.est1} \\
	& \big|\E\big((Z_i^k(t)-Z_i^k(t_{m-1}))\chi_i^k(\widetilde{X}(t_{m-1}))\big)\big|  
	\le C\tau^2(1+\theta^{3q'_i/2}) 
	+ 8\tau\max_{j=1,\ldots,n}\|K_{ij}\|_\infty^2\Gamma_i \label{5.est2} \\
	&\phantom{xx}{}+ \sqrt{\tau}\bigg((1+\sqrt{\tau})(1+\theta^{q'_i})\|Z_i^k(t)\|
	+ \sqrt{\tau}(1+\sqrt{\tau})\sum_{j=1}^n\|Z_j^1(t)\|\bigg). \nonumber
\end{align}

For the proof of estimate \eqref{5.est1}, the right-hand side of \eqref{3.aux3} 
contains the additional term
$$
  \widetilde{J}_{14} = \bigg\|\int_{t_{m-1}}^t (\sigma_i(\widetilde{X}_i^k)
	- \sigma_i(X_i^k))\d B_i^k\bigg\|.
$$
The square of $\widetilde{J}_{14}$ is estimated by using the It\^o isometry 
and the Lipschitz continuity of $\sigma_i$. Integrating and taking the square root 
then leads to the additional $C\sqrt{\tau}$ term. 

The proof of \eqref{5.est2} is very similar to \eqref{3.Z2}, except that we need 
the inequality
$$
  \bigg\|\frac{1}{p_i-1}\sum_{\ell\in\C_{i,r},\,\ell\neq k}|Z_i^k|\bigg\|
	\le C\theta(\tau + \|Z_i^1(t_{m-1})\|).
$$
The square of the left-hand side is formulated as
$$
  \bigg\|\frac{1}{p_i-1}\sum_{\ell\in\C_{i,r},\,\ell\neq k}|Z_i^k|\bigg\|^2
	= \E\bigg\{\E\bigg[\bigg(\sum_{\ell\in\C_{i,r},\,\ell\neq k}|Z_i^k|\bigg)^2
	|\F_{m-1}\bigg]\bigg\}.
$$
Since $\xi_{m-1}$ is $\F_{m-1}$ measurable, the inner expectation becomes
\begin{align*}
  \E\bigg[\bigg(\sum_{\ell\in\C_{i,r},\,\ell\neq k}|Z_i^k|\bigg)^2
	|\F_{m-1}\bigg] &= \sum_{\ell,\ell'\in\C_{i,r},\,\ell,\ell'\neq k}
	\E\big(|Z_i^\ell|\,|Z_i^{\ell'}|\big|\F_{m-1}\big) \\
	&\le \sum_{\ell,\ell'\in\C_{i,r},\,\ell,\ell'\neq k}
	\sqrt{\E(|Z_i^\ell|^2|\F_{m-1})}\sqrt{\E(|Z_i^{\ell'}|^2|\F_{m-1})},
\end{align*}
using the Cauchy--Schwarz inequality for the conditional expectation.
A straightforward computation leads to
$$
  \E(|Z_i^\ell|^2|\F_{m-1}) \le C\theta^2(\tau + |Z_i^k(t_{m-1})|)^2,
$$
from which we infer that
$$
  \E\bigg[\bigg(\sum_{\ell\in\C_{i,r},\,\ell\neq k}|Z_i^k|\bigg)^2
	|\F_{m-1}\bigg]
	\le C\theta^2\bigg(\sum_{\ell\in\C_{i,r},\,\ell\neq k}(\tau + |Z_i^k(t_{m-1})|)
	\bigg)^2.
$$
As $Z_i^\ell(t_{m-1})$ is independent of $\xi_{m-1}$, 
the proof finishes after applying Lemma \ref{lem.S}.
\end{proof}

A more complicated particle system with multiplicative noise was considered in
\cite{CDHJ20}, which leads in a mean-field-type limit to the 
Shigesada--Kawasaki--Teramoto population model:
\begin{align*}
  & \d X_i^k = -\na V_i(X_i^k)\d t + \bigg(\sigma_i^2 
  + \sum_{j=1}^n f\bigg(\alpha_{ij}\sum_{\substack{\ell=1 \\ 
  (i,k)\neq(j,\ell)}}^{N_j}K_{ij}(X_i^k-X_j^\ell)\bigg)\bigg)^{1/2}\d B_i^k(t),
\end{align*}
with initial conditions \eqref{1.ic}, $i=1,\ldots,n$, $k=1,\ldots,N_i$, and 
the function $f$ is globally Lipschitz continuous. Again, the random-batch
process $\widetilde{X}_i^k$ is similar to \eqref{1.rbm}. 
For this system, we have been not able to prove an error
estimate of order $\sqrt{\tau}$, but only a stability estimate of the form
$$
  \sum_{i=1}^n\|(X_i^k-\widetilde{X}_i^k)(t)\|
  \le C(t)\sqrt{t}\bigg(\sqrt{\tau} h(t,\tau,\theta) + \sum_{i=1}^n\Gamma_i\bigg),
  \quad t>0,
$$
where $h(t,\tau,\theta)$ is a smooth function. Compared to the error estimates of
Theorems \ref{thm.main} and \ref{thm.main2}, 
the bound $\sum_{i=1}^n\Gamma_i$ for the
variance of the remainder \eqref{2.chi} is not multiplied by $\sqrt{\tau}$.
Numerical simulations
(not shown) reveal a saturation effect when $\tau$ becomes very small, indicating
that the previous estimate cannot be improved. 

%%%%%%%%%%%%%%%%%%%%%%%%%%%%%%%%%%%%%%%%%%%%%%%%%%%%%%%%%%%%%%%%%%%%%%%%

\section{Numerical simulations}\label{sec.num}

We present numerical results for a test example, a population system, and
an opinion-formation model. The algorithm is implemented in Matlab.
The random shuffling is realized using the command {\tt randperm}, and
the stochastic differential equations are discretized by the standard
Euler--Maruyama scheme. 

\subsection{Discrete $L^2$ error for a test example}

We generalize the test example of \cite[Section 4.1]{JLL20}.
For this, we consider system \eqref{1.eq} with $n=3$ species in $d=2$ dimensions
and specify the functions
$$
  \na V_i(x) = r_i(x-m^{(i)}), \quad K_{ij}(x) = \frac{Q_iQ_j x}{1+|x|^2}, 
	\quad x\in\R^2,\ i,j=1,2,3,
$$
where the model parameters are $(Q_1,Q_2,Q_3)=(-1,2,-2)$, $(r_1,r_2,r_3)=(1,4,2)$,
and $m^{(1)}=(1,0)^T$, $m^{(2)}=-(1,1)^T$, $m^{(3)}=(1,1)^T$. This choice incorporates
different repulsive and attracting effects. The initial data are centered
Gaussian distributions with the variances $(v_1,v_2,v_3)=(2,2,1)$, 
where the index signifies the number of the species. 

For the first experiment, we choose the diffusion coefficients $\sigma_i=0.5$
for $i=1,2,3$ and the time step sizes $\tau=2^{-2},\ldots,2^{-6}$. The end time
is $T=1$, the batch sizes are $p_i=2$ for $i=1,2,3$, and the numbers $N_i$ of particles
of the $i$th species are $(N_1,N_2,N_3)=(100,100,200)$, $(1000,1000,2000)$, or
$(2500,2500,5000)$. Thus the total number of particles is $N=400$, $4000$, 
or $10000$.
We compare the random-batch solution with a reference solution, obtained by
solving the fully coupled system using the time step size $2^{-4},\ldots,2^{-8}$. 
Figure \ref{fig.convrate} (left) shows the discrete $L^2(\Omega)$ error for 
the different time step sizes, defined by
$$
  E = \bigg(\sum_{i=1}^n\frac{1}{N_i}\sum_{k=1}^{N_i}
	|\widetilde{X}_i^k(T)-X_i^k(T)|^2\bigg)^{1/2}.
$$
The reference line has the slope 1/2. The results clearly show
that the convergence rate is of order $O(\sqrt{\tau})$ as predicted by Theorem 
\ref{thm.main}. 

\begin{figure}[ht]
\includegraphics[width=80mm]{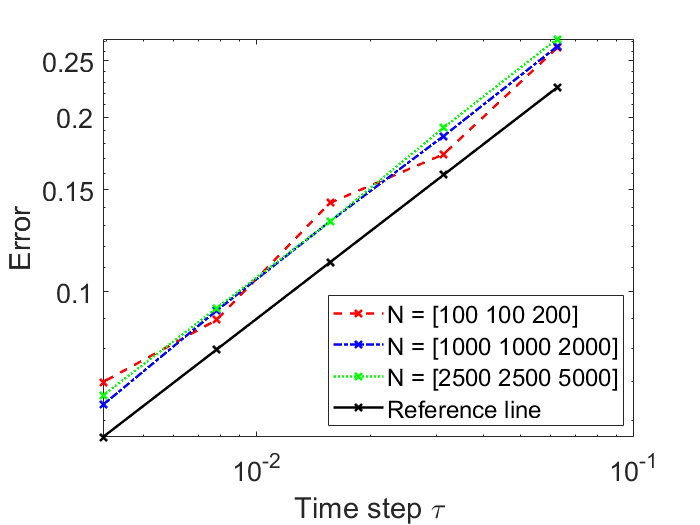}
\includegraphics[width=80mm]{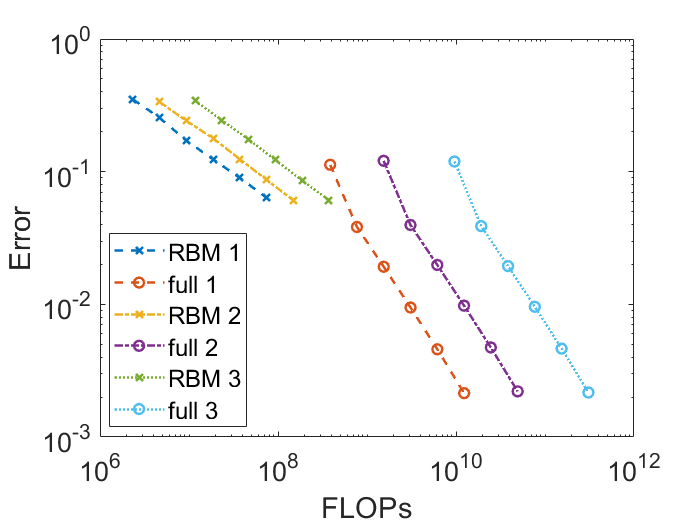}
\caption{Left: Discrete $L^2(\Omega)$ error $E$ versus time step size $\tau$
for various total particle numbers $N$.
Right: Discrete $L^2(\Omega)$ error versus number of FLOPs for various 
random-batch simulations (RBM) and the corresponding reference solutions (ref).}
\label{fig.convrate}
\end{figure}

Figure \ref{fig.convrate} (right) 
illustrates the $L^2(\Omega)$ error as a function of the
computational time, represented by the number of FLOPs (floating-point operations).
We choose $\sigma_i=0$ for all $i=1,\ldots,n$ to allow for the comparison of
the random-batch solution with a reference solution that is calculated beforehand.
The parameters for the random-batch algorithm are $T=1$, $n=2$, $d=2$, 
$(p_1,p_2)=(2,2)$,
$\tau=2^{-3},\ldots,2^{-7}$, and $(N_1,N_2)=(1250,1250)$ (RBM1, full 1),
$(2500,2500)$ (RBM2, full 2), or $(5000,5000)$ (RBM3, full 3). 
The reference solution is calculated from an explicit Euler scheme with the
time step size $\tau=2^{-1},\ldots,2^{-5}$. 
The number of FLOPs needed for the Matlab-internal functions are determined by the
lightspeed toolbox of Tom Minka (https://github.com/tminka/lightspeed). The total
numbers of FLOPs are then calculated by adding all needed operations manually.

Figure \ref{fig.convrate} (right) 
shows that the random-batch algorithm needs almost three
orders of magnitude less FLOPs than the reference algorithm. As expected, the discrete
$L^2(\Omega)$ error of the random-batch scheme is larger than that one of the
reference scheme for a given time step. 
However, for a given error, the number of FLOPs of the
random-batch algorithm is still much smaller compared to the reference algorithm,
namely by about two orders of magnitude.

\subsection{A population system}

We consider the population system derived in \cite{CDJ19} without external
potentials using the following parameters:
$n=3$, $d=1$, $T=2$, $N_i=5000$ for $i=1,2,3$, and
$(\sigma_1,\sigma_2,\sigma_3)=(1,2,3)$. The interaction kernels are given by
$K_{ij}=\na B_{ij}^\eta$, where $B_{ij}^\eta(x)=\eta^{-1}B_{ij}(x/\eta)$,
$B_{ij}(x)=D_{ij}\exp(1-1/(1-|x|^2))\mathrm{1}_{\{|x|<1\}}(x)$ for $x\in\R$, 
$\eta=2$, and
$$
  (D_{ij}) = \begin{pmatrix} 
  0 & 355 & 355 \\
  25 & 0 & 25 \\
  355 & 0 & 0 \end{pmatrix}.
$$
The initial data are Gaussian normal distributions with means
$(m_1,m_2,m_3)=(-1,2,3)$ and variances $(v_1,v_2,v_3)=(2,2,2)$.

Figure \ref{fig.popul} (left) illustrates the approximate probability densities
at time $T=2$ obtained by simulating the particle system 1000 times with 
the batch sizes $p_i=20$ for $i=1,2,3$ and the time step size $\tau=10^{-2}$.
We observe that the species segregate and avoid each other.
Each of the simulation requires about $2\cdot 10^{10}$ FLOPs, which needs to be
compared to about $5\cdot 10^{12}$ FLOPs required when using full interactions. 
This is a reduction of the numerical effort of more than two orders of magnitude.

Clearly, the reduction of computational cost comes at the price of an increased
error. Figure \ref{fig.popul} (right) presents the discrete $L^2(\Omega)$ error
versus the number of FLOPs for various configurations of the batch sizes
and various time step sizes. The end time is $T=1$, and we used batch sizes
$p_i=2,10,100,1000$ and time step sizes $\tau=2^{-1},\ldots,2^{-7}$.
The different points per line correspond to different values of $\tau$.
The reference solution is computed from the Euler--Maruyama scheme
with the step size $\tau=2^{-9}$; this simulation needed about $10^{13}$ FLOPs.
We see that the error decreases with the time step size and larger batch sizes.
The red dot in the figure indicates the number of FLOPs needed to
compute a numerical solution with full interactions and step size $\tau=10^{-2}$, 
to give a more practical point of reference. 
This simulation required about $2.5\cdot 10^{12}$ FLOPs,
while the random-batch algorithm with $\tau=2^{-7}$ was about four times faster.

\begin{figure}[ht]
\includegraphics[width=80mm]{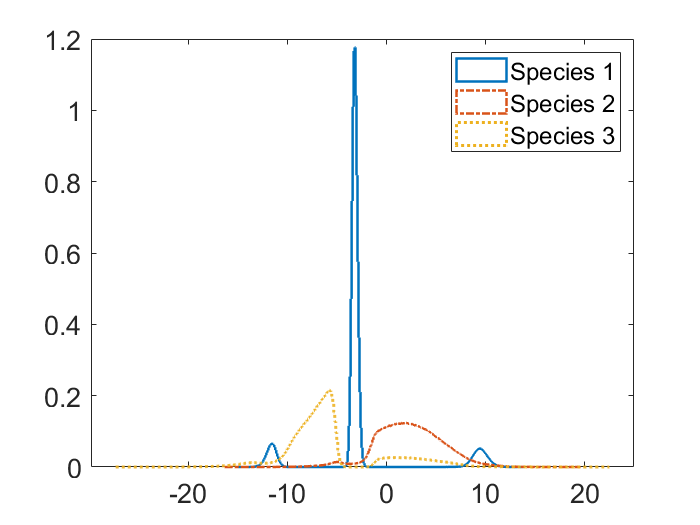}
\includegraphics[width=80mm]{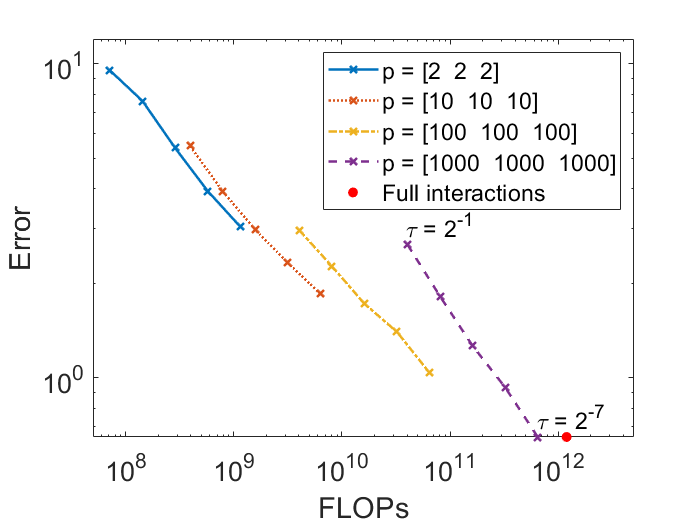}
\caption{Left: Histogram of the population model derived in \cite{CDJ19}
for three species at time $T=2$. Right: Discrete $L^2(\Omega)$ error versus
number of FLOPs for various batch sizes $p$ and time step sizes $\tau$. }
\label{fig.popul}
\end{figure}

\subsection{Opinion dynamics model}\label{sec.comp}

We model a company whose internal hierarchy regulates the communication
between three different types of agents: workers (species 1), managers (species 2),
and CEOs (species 3). The agents obey the following rules:
\begin{itemize}
\item CEOs can be only influenced by other CEOs. They influence managers 
(but not vice versa) and they do not interact with workers.
\item Managers can influence workers but not other managers or CEOs.
\item Workers can only influence each other.
\end{itemize}
The dynamics of opinions is described by the system
\begin{align*}
  & \d X_i^k(t) =  \sum_{j=1}^3\frac{1}{N_i-\delta_{ij}}
	\sum_{\ell=1,\,(i,k)\neq(j,\ell)}^{N_j}K_{ij}(X_i^k(t)-X_j^\ell(t))\d t
	+ \sigma\d t, \\
	& X_i^k(0) = X_{0,i}^k, \quad i,j=1,2,3, \ k=1,\ldots,N_i,\ 0<t\le T,
\end{align*}
which is a generalization of a model discussed in \cite{MoTa14}. 
The interaction is modeled by $K_{ij}(x)=-D_{ij}\phi(x/R_j)x$ for $x\in\R$,
where $\phi(x)=\exp(1-1/(1-|x|^{10}))\mathrm{1}_{(-1,1)}$ is a smooth approximation 
of the characteristic function $\mathrm{1}_{(-1,1)}$. 
The value $D_{ij}$ is a measure of 
the influence that an agent of species $j$ has over an agent of species $i$.
According to the above-mentioned interaction rules, the matrix $D=(D_{ij})$
has the structure  
$$
  D = \begin{pmatrix}
	D_{11} & D_{12} & 0 \\
	0      & 0      & D_{23} \\
	0      & 0      & D_{33}
	\end{pmatrix}.
$$
%For instance, $D_{23}$ is a measure of the influence of the CEOs over the managers.

As the only way for CEOs to communicate with the workers happens indirectly
via the managers, we wish to explore the influence of the managers to achieve
a consensus. In particular, we consider managers that are very submissive to authority
($D_{23}\gg 1$) or that are less obedient ($D_{23}\le 1$). 
For the simulations, we use 5000 workers, 10 managers and 2 CEOs. The parameters
are $\sigma=0.1$, $T=5$, $\tau=10^{-5}$, and $(p_1,p_2,p_3)=(20,2,2)$.
The initial conditions are drawn from a uniform distribution on the interval
$[0,10]$. The interaction radii are $(R_1,R_2,R_3)=(1,2.5,5)$. 

In the first case (submissive managers), we choose the influence values
$$
  D_{11} = 5,\quad D_{12} = 10, \quad D_{23} = 25, \quad D_{33} = 0.1.
$$
Figure \ref{fig.comp1} (left) shows one simulation of the particle system. 
We observe that the managers are very eager to 
find a compromise between the opinions of the two CEOs. This change of the
opinion occurs too fast for the workers with more extreme opinions,
as they are not as susceptible as the managers (since $D_{12}<D_{23}$).
Therefore, they leave quickly the range of interaction of the managers and
form their own clusters. Only those workers who have an opinion already close
to that one of the CEOs, agree with the company policy and change their opinion
accordingly. 

\begin{figure}[ht]
\includegraphics[width=80mm]{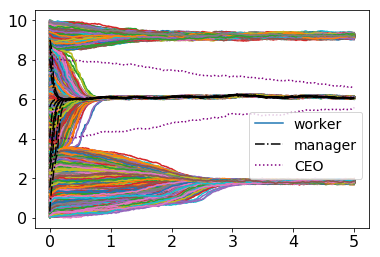}
\includegraphics[width=80mm]{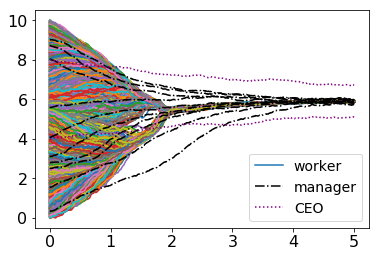}
\caption{Opinion versus time of the CEOs and managers in the case of very
submissive (left) or less obedient (right) managers.}
\label{fig.comp1}
\end{figure}

In the second case (less obedient managers), we choose the same values
of $D_{ij}$ as before except $D_{23}=1$. This means that the influence of the
CEOs over the managers is rather small. Figure \ref{fig.comp1} (right) shows that
the managers change their opinion slowly enough for the workers
to adapt their opinion, as they stay within their range of interaction. Eventually,
this leads to a consensus of opinion.

The simulations suggest that small changes over time are more likely to lead in 
an adjustment of the opinion and eventually to a consensus. In this picture, 
managers should not impose their opinion too quickly, but they should introduce
the changes sufficiently slowly such that the workers can adjust in time.

Finally, we explore the influence of the batch size on the running time and
the error. We consider 10000 workers, 100 managers, and 10 CEOs and choose
the parameters $\tau=2^{-3},\ldots,2^{-7}$, $T=4$, and $\sigma=0.1$.
The batch sizes are $(p_1,p_2,p_3)=(2,2,2)$, $(20,5,2)$, $(200,20,2)$, and
$(2000,20,2)$. Figure \ref{fig.batch} shows that the 
discrete $L^2(\Omega)$ error decreases
with larger batch sizes (since this involves more interactions), 
smaller time step sizes, or
$\theta$ closer to one, which is consistent with our error estimate. 
Clearly, the number of FLOPs increases with larger batch sizes.

\begin{figure}[ht]
\includegraphics[width=90mm]{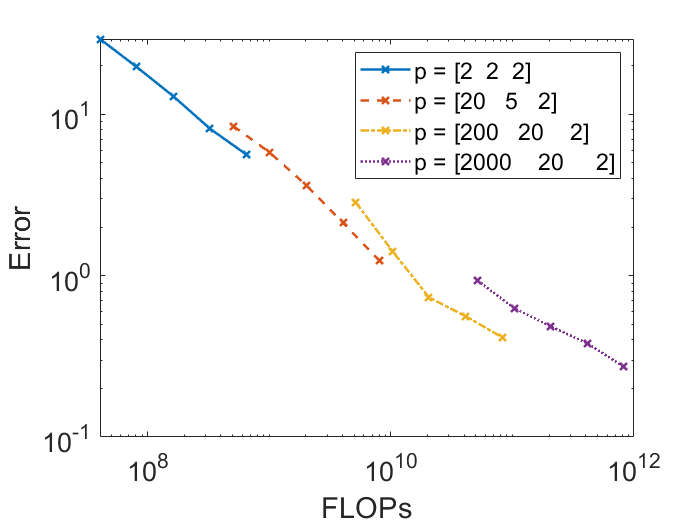}
\caption{$L^2(\Omega)$ error versus number of FLOPs for different batch sizes
and time step sizes $\tau=2^{-3},\ldots,2^{-7}$.}
\label{fig.batch}
\end{figure}

\begin{appendix}
\section{Auxiliary results}\label{app}

We recall some results involving the conditional expectation; see
\cite[Chapter 5]{Fel70}. Let $(\Omega,\F,\Prob)$ be a probability space.

\begin{lemma}\label{lem.cond0}
Let ${\mathcal H}$ be a sub-$\sigma$-algebra of ${\mathcal F}$ and let
$X$, $Y:\Omega\to\R^d$ be random variables such that $X$ is 
${\mathcal H}$-measurable. Then
$$
  \E(X|{\mathcal H}) = X, \quad \E(XY|{\mathcal H}) = X\E(Y|{\mathcal H}).
$$
In particular, the law of total expectation holds: $\E[\E(X|{\mathcal H})] = \E(X)$.
\end{lemma}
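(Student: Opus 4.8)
The plan is to argue directly from the defining (averaging) characterization of the conditional expectation: for an integrable random variable $W$, the object $\E(W|\mathcal{H})$ is the $\Prob$-almost surely unique $\mathcal{H}$-measurable random variable for which $\int_A \E(W|\mathcal{H})\,\d\Prob = \int_A W\,\d\Prob$ holds for every $A \in \mathcal{H}$ (applied componentwise for the $\R^d$-valued case). The first identity then follows at once: since $X$ is by hypothesis $\mathcal{H}$-measurable, it is itself an admissible version of $\E(X|\mathcal{H})$, because it is $\mathcal{H}$-measurable and satisfies the defining identity $\int_A X\,\d\Prob = \int_A X\,\d\Prob$ trivially for all $A \in \mathcal{H}$. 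By the almost sure uniqueness of the conditional expectation, $\E(X|\mathcal{H}) = X$.

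For the factorization identity $\E(XY|\mathcal{H}) = X\,\E(Y|\mathcal{H})$, I would run the standard ``taking out what is known'' approximation scheme. The candidate $X\,\E(Y|\mathcal{H})$ is $\mathcal{H}$-measurable as a product of two $\mathcal{H}$-measurable factors, so it suffices to verify the averaging identity $\int_A X\,\E(Y|\mathcal{H})\,\d\Prob = \int_A XY\,\d\Prob$ for all $A \in \mathcal{H}$. I would first check this for $X = \mathbf{1}_B$ with $B \in \mathcal{H}$, where it reduces to $\int_{A\cap B}\E(Y|\mathcal{H})\,\d\Prob = \int_{A\cap B} Y\,\d\Prob$, an instance of the definition of $\E(Y|\mathcal{H})$ applied to the set $A\cap B \in \mathcal{H}$. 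Linearity extends this to $\mathcal{H}$-measurable simple functions $X$, and the monotone convergence theorem (for ordinary and conditional expectations) extends it first to nonnegative $\mathcal{H}$-measurable $X$ and then, by splitting into positive and negative parts, to general $X$ under the integrability hypotheses that make $XY$ and $X\,\E(Y|\mathcal{H})$ integrable.

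The law of total expectation is the special case $A = \Omega$ of the defining identity: $\E[\E(X|\mathcal{H})] = \int_\Omega \E(X|\mathcal{H})\,\d\Prob = \int_\Omega X\,\d\Prob = \E(X)$, valid for any integrable $X$. I expect the only genuinely delicate point to be the integrability bookkeeping in the factorization step, namely ensuring that the products on both sides lie in $L^1(\Omega)$ so that the limit passage in the approximation is justified; in every application in this paper the relevant factors are bounded kernels or processes with the finite moments granted by Assumptions (A1)--(A4) and the stability estimates of Lemma \ref{lem.stab1}, so this condition is automatically met.
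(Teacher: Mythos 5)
Your proof is correct and is the standard argument; the paper itself does not prove this lemma but simply cites \cite[Chapter 5]{Fel70}, and your derivation (uniqueness of the $\mathcal{H}$-measurable version satisfying the averaging identity, the ``taking out what is known'' extension from indicators to simple functions to general $X$ via monotone convergence, and the case $A=\Omega$ for the law of total expectation) is precisely what such a reference contains. Your remarks on handling the $\R^d$-valued case componentwise and on the integrability needed for the product formula are apt and cover all the uses of the lemma in the paper.
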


\begin{lemma}\label{lem.cond1}
Let $\G\subset\F$ be a $\sigma$-algebra, and $(X(t))_{t\ge 0}$ be an integrable 
stochastic process. Then, for any $t > 0$,
$$
  \E\bigg(\int_0^t X(s)\d s\bigg|\G\bigg) = \int_0^t\E(X(s)|\G)\d s.
$$
\end{lemma}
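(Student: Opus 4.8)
The plan is to verify that the right-hand side satisfies the two defining properties of the conditional expectation $\E\big(\int_0^t X(s)\,\d s\,\big|\,\G\big)$: that it is $\G$-measurable, and that it integrates correctly against indicators of sets in $\G$. Write $Y:=\int_0^t X(s)\,\d s$ and $W:=\int_0^t\E(X(s)|\G)\,\d s$. First I would record that for each fixed $s$ the random variable $\E(X(s)|\G)$ is $\G$-measurable by definition; choosing a jointly measurable version of the process $(s,\omega)\mapsto\E(X(s)|\G)(\omega)$, the $s$-integral $W$ is again $\G$-measurable, so the candidate lives in the correct $\sigma$-algebra.

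The core of the argument is the integral identity. Fix an arbitrary $A\in\G$. Using Fubini's theorem (justified by the integrability of the process) together with the defining property of $\E(X(s)|\G)$ for each fixed $s$, I would compute
\begin{align*}
  \E(\mathbf{1}_A Y) &= \int_0^t\E\big(\mathbf{1}_A X(s)\big)\,\d s
	= \int_0^t\E\big(\mathbf{1}_A\,\E(X(s)|\G)\big)\,\d s
	= \E(\mathbf{1}_A W).
\end{align*}
The middle equality is valid because $\mathbf{1}_A$ is $\G$-measurable, so that $\E(\mathbf{1}_A X(s))=\E(\mathbf{1}_A\,\E(X(s)|\G))$ by the defining property of the conditional expectation; the outer two equalities are interchanges of $\E=\int_\Omega\d\Prob$ with $\int_0^t\d s$. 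Since $W$ is $\G$-measurable and $\E(\mathbf{1}_A Y)=\E(\mathbf{1}_A W)$ holds for every $A\in\G$, the $\Prob$-a.s.\ uniqueness of the conditional expectation yields $W=\E(Y|\G)$, which is precisely the assertion of the lemma.

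The only genuine technical point is the joint measurability of $(s,\omega)\mapsto\E(X(s)|\G)(\omega)$, which is needed both to define $W$ as a bona fide random variable and to license the two applications of Fubini. I would dispatch this by selecting a measurable (indeed progressively measurable) version of the conditional-expectation process; such a version exists for an integrable, jointly measurable process $X$, and in every application within this paper the integrands are continuous in $t$, so the issue causes no difficulty. With this version fixed, both interchanges of $\int_\Omega\d\Prob$ and $\int_0^t\d s$ are legitimate because $\E\int_0^t|X(s)|\,\d s<\infty$, and the proof reduces to the elementary manipulation displayed above.
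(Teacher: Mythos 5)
Your proof is correct and follows exactly the route the paper indicates: the paper gives no argument of its own but simply remarks that the lemma "is a consequence of Fubini's theorem" (citing Brooks), and your write-up supplies the standard details of that Fubini argument—verifying $\G$-measurability of the candidate and the integral identity over sets $A\in\G$, with the joint-measurability caveat properly noted.
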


The lemma is a consequence of Fubini's theorem \cite[Lemma 2.3]{Bro72}.

\begin{lemma}\label{lem.cond2}
Let $T>0$, $(B(t))_{t\ge 0}$ be a $d$-dimensional Brownian motion, and 
$\F_t=\sigma(B(s),$ $s\le t)$ for $t\le T$. Furthermore, let $X(t)\in\R^d$ be a 
square integrable, progressively measurable process with respect to $\F_t$.
Then, for any $0\le s_1\le s_2\le T$,
$$
  \E\bigg(\int_{s_1}^{s_2}X(t)\d B(t)\bigg|\F_{s_1}\bigg) = 0.
$$
\end{lemma}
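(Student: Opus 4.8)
The plan is to recognize this as the martingale property of the It\^o integral and to prove it by the standard two-step approximation underlying the definition of the integral: first verify the identity for elementary integrands, then pass to the limit by means of the It\^o isometry. Throughout I write $M_{s_1}^{s_2}(X):=\int_{s_1}^{s_2}X(t)\,\d B(t)$.

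First I would treat \emph{simple} processes $X(t)=\sum_{i=0}^{K-1}\xi_i\mathbf{1}_{[\tau_i,\tau_{i+1})}(t)$, where $s_1=\tau_0<\tau_1<\cdots<\tau_K=s_2$ and each coefficient $\xi_i$ is $\F_{\tau_i}$-measurable and square integrable. For such $X$ the integral is the finite sum $M_{s_1}^{s_2}(X)=\sum_i\xi_i(B(\tau_{i+1})-B(\tau_i))$. Conditioning each summand first on $\F_{\tau_i}$ (which contains $\F_{s_1}$) and using the tower property together with the pull-out rule from Lemma \ref{lem.cond0},
$$
  \E\big(\xi_i(B(\tau_{i+1})-B(\tau_i))\,\big|\,\F_{s_1}\big)
	= \E\Big(\xi_i\,\E\big(B(\tau_{i+1})-B(\tau_i)\,\big|\,\F_{\tau_i}\big)\,\Big|\,\F_{s_1}\Big) = 0,
$$
since the increment $B(\tau_{i+1})-B(\tau_i)$ is independent of $\F_{\tau_i}$ and centered, while $\xi_i$ is $\F_{\tau_i}$-measurable. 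Summing over $i$ gives $\E(M_{s_1}^{s_2}(X)\mid\F_{s_1})=0$ for every simple $X$.

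For a general square-integrable, progressively measurable $X$ I would use the density of simple processes in the space of such integrands: there exist simple $X_n$ with $\E\int_{s_1}^{s_2}|X_n(t)-X(t)|^2\,\d t\to 0$. The It\^o isometry then yields $M_{s_1}^{s_2}(X_n)\to M_{s_1}^{s_2}(X)$ in $L^2(\Omega)$, and, since $\E(\,\cdot\mid\F_{s_1})$ is an orthogonal projection and hence an $L^2$-contraction, $\E(M_{s_1}^{s_2}(X_n)\mid\F_{s_1})\to\E(M_{s_1}^{s_2}(X)\mid\F_{s_1})$ in $L^2(\Omega)$. As the left-hand side vanishes for every $n$ by the previous step, the limit vanishes as well. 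The step I expect to be most delicate is precisely this passage to the limit: one must justify that $X$ is approximable by simple processes \emph{adapted} to $(\F_t)$, which is exactly where the progressive measurability and square integrability hypotheses enter; once the It\^o isometry is invoked, the $L^2$-continuity of the conditional expectation closes the argument immediately.
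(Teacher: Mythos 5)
Your proof is correct and rests on the same fact the paper invokes, namely the martingale property of the It\^o integral: the paper simply cites that $S(t)=\int_0^t X(s)\,\d B(s)$ is a martingale, while you supply the standard proof of that property via simple processes, the tower property, and the $L^2$-continuity of conditional expectation combined with the It\^o isometry. All steps are sound, so this is a more self-contained version of the same argument.
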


This lemma follows from the fact that $S(t):=\int_0^t X(s) \d B(s)$ is a 
martingale and consequently, $\E(S(s_1)-S(s_2))=0$ a.s.\ for $0\le s_1\le s_2\le T$.

\end{appendix}

%%%%%%%%%%%%%%%%%%%%%%%%%%%%%%%%%%%%%%%%%%%%%%%%%%%%%%%%%%%%%%%%%%%%%%%%

\end{document}